\documentclass[a4paper]{amsart}
\usepackage[latin1]{inputenc}
\usepackage{amssymb}
\usepackage{amsmath}
\usepackage{mathrsfs}
\usepackage{eufrak}
\usepackage{amsthm}
\usepackage{amsfonts}
\usepackage{textcomp}
\usepackage{graphicx}
\usepackage[pdftex]{color}
\usepackage{paralist}
\usepackage[shortlabels]{enumitem}
\usepackage{hyperref}
\usepackage{comment}
\usepackage[arrow, matrix, curve]{xy}
\usepackage{tikz}
\usepackage{tikz-cd}

\newtheorem*{corollary*}{Corollary}
\newtheorem*{conjecture*}{Conjecture}
\newtheorem*{example*}{Example}
\newtheorem*{theorem*}{Theorem}
\newtheorem*{proposition*}{Proposition}

\newtheorem{theorem}{Theorem}[section]

\newtheorem{lemma}[theorem]{Lemma}
\newtheorem{proposition}[theorem]{Proposition}

\newtheorem*{claim*}{Claim}

\newtheorem*{question}{Question}

\theoremstyle{definition}
\newtheorem{definition}[theorem]{Definition}

\theoremstyle{remark}

\numberwithin{equation}{section}

\makeatletter
\renewcommand*\env@matrix[1][\
arraystretch]{%
  \edef\arraystretch{#1}%
  \hskip -\arraycolsep
  \let\@ifnextchar\new@ifnextchar
  \array{*\c@MaxMatrixCols c}}
\makeatother

\newcommand{\Ext}{\operatorname{Ext}}

\newcommand{\gldim}{\operatorname{gldim}}
\newcommand{\End}{\operatorname{End}}

\newcommand{\pdim}{\operatorname{pdim}}
\newcommand{\idim}{\operatorname{idim}}

\newcommand{\gr}{\operatorname{grade}}
\newcommand{\cogr}{\operatorname{cograde}}

\newcommand{\Hom}{\operatorname{Hom}}

\renewcommand{\top}{\operatorname{\mathrm{top}}}
\newcommand{\rad}{\operatorname{\mathrm{rad}}}
\newcommand{\flatdim}{\operatorname{\mathrm{flatdim}}}
\newcommand{\soc}{\operatorname{\mathrm{soc}}}

\newcommand{\op}{\operatorname{op}}
\newcommand{\upset}[1]{\mathord{\uparrow}\!#1}
\newcommand{\downset}[1]{\mathord{\downarrow}\!#1}
\begin{document}

\title{A new characterisation of Auslander-Gorenstein algebras}
\date{\today}
\author[V.~Kl\'asz]{Vikt\'oria Kl\'asz}%
\address[V.~Kl\'asz]{Mathematical Institute of the University of Bonn, Endenicher Allee 60, 53115 Bonn, Germany}%
\email{klasz@math.uni-bonn.de}%

\author[M.~Kleinau]{Markus Kleinau}%
\address[M.~Kleinau]{Mathematical Institute of the University of Bonn, Endenicher Allee 60, 53115 Bonn, Germany}%
\email{mkleinau@math.uni-bonn.de}%

\author[R.~Marczinzik]{Ren\'e Marczinzik}%
\address[R.~Marczinzik]{Mathematical Institute of the University of Bonn, Endenicher Allee 60, 53115 Bonn, Germany}
\email{marczire@math.uni-bonn.de}

\author[J.~Marquardt]{Judith Marquardt}%
\address[J.~Marquardt]{Univ. Grenoble Alpes, CNRS, IF, 38000 Grenoble, France, and Universit\'e Paris-Saclay, UVSQ, CNRS, Laboratoire de Math\'ematiques de Versailles, 78000, Versailles, France}
\email{judith.marquardt@univ-grenoble-alpes.fr}

\subjclass[2020]{Primary 16G10, 16E10}

\keywords{Auslander-Gorenstein algebras, incidence algebras of lattices, Auslander-Reiten bijection, grade bijection}

\begin{abstract}
We give a new characterisation of Auslander-Gorenstein finite dimensional algebras by showing that they are exactly the finite dimensional algebras with a well-defined Auslander-Reiten bijection. This proves a conjecture of Marczinzik. 
We give three applications of this new characterisation of Auslander-Gorenstein algebras. First we show that for certain classes of acyclic quiver algebras, the Auslander-Gorenstein property can be detected via the Bruhat factorisation of the Coxeter matrix. Second, we give a new proof that a finite lattice with an Auslander-Gorenstein incidence algebra has to be distributive. The last application answers a question of Iyama by showing that the diagonal Auslander regular property is not left-right symmetric.
\end{abstract}

\maketitle

\section*{Introduction}
A two-sided noetherian ring $A$ is said to be \emph{$n$-Gorenstein} for some $n \geq 1$ if there exists an injective coresolution $0 \rightarrow A \rightarrow I^0 \rightarrow I^1 \rightarrow \cdots $ such that $\flatdim I^i \leq i$ for all $0 \leq i < n$ and $A$ is said to satisfy the \emph{Auslander condition} if $A$ is $n$-Gorenstein for all $n \geq 1$. We remark that when $A$ is a finite dimensional $K$-algebra, the flat dimension simply coincides with the projective dimension.
$A$ is \emph{Auslander-Gorenstein} if it satisfies the Auslander condition and is \emph{Iwanaga-Gorenstein} in the sense that $\idim A_A= \idim {}_{A}A < \infty$. 
Auslander-Gorenstein rings are a non-commutative generalisation of the classical Gorenstein rings from commutative algebra due to Auslander, see \cite{B,FGR}.
Examples of Auslander-Gorenstein rings include enveloping algebras of finite dimensional Lie algebras, Weyl algebras, and rings of $\mathbb{C}$-linear differential operators on an irreducible smooth subvariety of an affine
space, see \cite[Chapter 3]{VO} for more details.
Auslander-Gorenstein algebras are also important in $p$-adic representation theory and related fields; see \cite{V,ST,AW}.
For the importance and applications of Auslander-Gorenstein rings, we refer to the two surveys \cite{Cl} and \cite{KM}. Especially for finite dimensional algebras the notion of Auslander-Gorenstein algebras played an increasingly important role and many new classes of Auslander-Gorenstein algebras were discovered in recent years, we mention for example blocks of category $\mathcal{O}$ \cite{KMM}, incidence algebras of distributive lattices \cite{IM}, higher Auslander algebras and generalisations \cite{I1,CIM,BHMPT} and partial classification results for monomial algebras \cite{K}.
The goal of this article is to give two new characterisations for Auslander-Gorenstein finite dimensional algebras.
In the following, we assume that all algebras are finite dimensional $K$-algebras for a field $K$ and modules are finitely generated right modules unless otherwise stated. Before we can state our main result, we give two definitions. The first definition can already be found in \cite{K,KM,KMT}.

\begin{definition}
An algebra $A$ is said to have a \emph{well-defined Auslander-Reiten map} if every indecomposable injective $A$-module $I$ has a finite minimal projective resolution 
$$0 \rightarrow P_{d_I}(I) \rightarrow \cdots \rightarrow P_0(I) \rightarrow I \rightarrow 0$$
with $P_{d_I}(I)$ indecomposable projective and $d_I$ the projective dimension of $I$. In this case, one defines the map $\psi$ from the (isomorphism classes of) indecomposable injective $A$-modules to the (isomorphism classes of) indecomposable projective $A$-modules as $\psi(I)=P_{d_I}(I)$.
When $\psi$ is bijective, we say that $A$ has a \emph{well-defined Auslander-Reiten bijection}.
\end{definition}
We remark that when $B$ is a representation-finite algebra with $M$ the direct sum of all indecomposable $B$-modules, then the Auslander algebra $A=\End_B(M)$ of $B$ is Auslander-Gorenstein and the Auslander-Reiten bijection of $A$ corresponds to the classical Auslander-Reiten translate on the indecomposable $B$-modules, we refer to \cite{MTY} for details.
The following definition is new:

\begin{definition}
An Iwanaga-Gorenstein algebra $A$ is said to have a \emph{well-defined grade map} if for every simple $A$-module $S$ with $g_S:=\gr S,$ we have that the module $h(S):=\top D \Ext_A^{g_S}(S,A)$ is simple. If the map $S \mapsto h(S)$ is additionally a bijection from (the isomorphism classes of) simple $A$-modules to (the isomorphism classes of) simple $A$-modules, then we say that $A$ has a \emph{well-defined grade bijection}.  
\end{definition}
Here we remark that the grade of an $A$-module $M$ is defined as $\gr M:= \inf \{ i \geq 0 \mid \Ext_A^i(M,A) \neq 0\}$ and this is always finite for Iwanaga-Gorenstein algebras, see for example \cite[Lemma 2.2]{KMT}.
It was first shown by Auslander and Reiten in \cite{AR} that Auslander-Gorenstein algebras have a well-defined Auslander-Reiten bijection and in \cite{I2} by Iyama that Auslander-Gorenstein algebras have a well-defined grade bijection. In \cite{KMT} it was shown that up to a canonical identification, the Auslander-Reiten bijection and the grade bijection for Auslander-Gorenstein algebras coincide, that is, we have 
\begin{equation*}
 \psi(I(S))=P(h(S)),
\end{equation*} where $I(S)$ denotes the injective envelope and $P(S)$ the projective cover of a simple module $S$.

The main result of this article is the following theorem:
\begin{theorem}
Let $A$ be a finite dimensional algebra. Then the following are equivalent:
\begin{enumerate}
    \item $A$ is Auslander-Gorenstein.
    \item $A$ is Iwanaga-Gorenstein and has a well-defined grade bijection $h$.
    \item $A$ has a well-defined Auslander-Reiten bijection $\psi$.
\end{enumerate}
In this case, we have 
\begin{equation*}
 \psi(I(S))=P(h(S)).
\end{equation*}
\end{theorem}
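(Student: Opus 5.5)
The implications (1)$\Rightarrow$(2) and (1)$\Rightarrow$(3), together with the formula $\psi(I(S))=P(h(S))$, are already known: Auslander--Reiten \cite{AR}, Iyama \cite{I2} and \cite{KMT}. The plan is therefore to prove (3)$\Rightarrow$(1) and (2)$\Rightarrow$(1). For (2)$\Rightarrow$(1) I would also try to route through (3), by showing that a well-defined grade bijection forces a well-defined Auslander--Reiten bijection.

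\textbf{Step 1: from (3) to Iwanaga-Gorenstein.} Assume (3). Every indecomposable injective has finite projective dimension, so $\pdim D(A)<\infty$, which gives $\idim A_A<\infty$ on one side. For the other side I would use that $\psi$ is a bijection, so every indecomposable projective $P$ equals $P_{d_I}(I)$ for some $I$. Apply $(-)^*=\Hom_A(-,A)$ to the minimal projective resolution of $I$ and write $d=d_I$. I expect to get $\Ext^{d}_A(I,A)\neq 0$ and $\Ext^i_A(I,A)=0$ for $i\neq d$; this vanishing would come from a dimension count. The dualised complex is then a minimal projective resolution of the left module $\Ext^{d}_A(I,A)$, ending in $P_0(I)^*$ and beginning with the indecomposable left projective $P^*$. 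Alternatively, apply the Nakayama functor $\nu$ and compare.

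\textbf{Step 2: the Auslander condition.} I would use Iyama's criterion, as in \cite{I2}: an Iwanaga-Gorenstein algebra is Auslander-Gorenstein if and only if $\gr \Ext^i_A(M,A)\geq i$ for all $M$. Equivalently, in the minimal injective coresolution $0\to A\to I^0\to I^1\to\cdots$, every summand $I(S)$ of $I^i$ has $\pdim I(S)\leq i$. The key identification is this: $I(S)$ is a summand of $I^i$ exactly when $\Ext^i_A(S,A)\neq 0$. I expect that $\psi(I(S))=P(T)$ means $\Ext^{d_{I(S)}}_A(D(Ae_T),A)$ is nonzero at the appropriate vertex. A Nakayama-functor argument should then give $\Ext^{d}_A(S',A)\neq 0$ for the simple $S'$ matched to $I(S)$. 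I would set up a counting argument by comparing two multisets over the simples: the first records the degrees $i$ with $I(S)\mid I^i$, the second records the pairs $(\pdim I, \psi(I))$. Bijectivity of $\psi$ should force each indecomposable projective to occur as the last term exactly once. From there I would attempt an induction on $i$, showing that the first occurrence of $I(S)$ in the coresolution happens in degree $\geq d_{I(S)}$. The base case uses that $I^0$ consists of injectives that are projective-injective, which have $d=0$.

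\textbf{Step 3: (2)$\Rightarrow$(1).} Assuming Iwanaga-Gorenstein, I would use the duality $\Ext^{g}_A(-,A)$ between grade-$g$ Cohen--Macaulay-type modules. The well-definedness of $h$ says $\Ext^{g_S}_A(S,A)$ has simple socle after duality. I would then show that the bijectivity of $h$ forces $\gr \Ext^{g_S}_A(S,A)\geq g_S$, and that this propagates to the Auslander condition by the same counting as in Step 2. Once everything is established, the formula $\psi(I(S))=P(h(S))$ follows from \cite{KMT}.

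\textbf{Main obstacle.} The hardest part is Step 2: turning bijectivity of $\psi$, which is a purely combinatorial condition on the ends of resolutions, into the homological inequality $\pdim I^i\leq i$. My first attempt would be induction on $i$, using the bijection to rule out an injective $I(S)$ appearing too early in the coresolution.
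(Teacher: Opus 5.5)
There are genuine gaps. Neither (3)$\Rightarrow$(1) nor (2)$\Rightarrow$(1) is actually argued, and the one concrete expectation in Step 1 is false. You expect $\Ext^i_A(I,A)=0$ for $i\neq d_I$. Take the Auslander algebra $A$ of $K[x]/(x^2)$, which is Auslander regular and so satisfies (3). Its non-projective indecomposable injective $I$ is uniserial of length $2$, with minimal resolution $0\to P'\to Q\to Q\to I\to 0$. Here $Q$ is the projective-injective and $P'$ is the other indecomposable projective, so $d_I=2$. But the top of $I$ is isomorphic to the socle of $P'$, so $\Hom_A(I,A)\neq 0$, and the dualised complex is not a resolution of $\Ext^{d}_A(I,A)$. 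More structurally, (3) only gives $\pdim D(A)_A<\infty$, i.e.\ $\idim {}_AA<\infty$. That is the left side, not $\idim A_A$. Deducing finiteness on the other side is, in general, the open Gorenstein symmetry problem. The paper therefore does not prove Iwanaga--Gorenstein first: it proves the Auslander condition and only then applies \cite[Corollary 5.5(b)]{AR}. Your Step 2 presupposes Step 1. Its base case ``$I^0$ is projective-injective'' is exactly the $1$-Gorenstein property, i.e.\ the first instance of what has to be shown. Comparing multisets of degrees and endpoints gives no reason why some $I(S)$ cannot occur too early.

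The idea you are missing is a radical-factorisation lemma. Let $P=\psi(I)$ with $d=\pdim I>0$, and let $\idim N<d$. Then $\Ext^1_A(\Omega^{d-1}(I),N)\cong\Ext^d_A(I,N)=0$, so every map $P\to N$ factors through the last differential $P\to P_{d-1}(I)$, whose image lies in the radical. Hence no map $P\to N$ leaves $\rad N$, and $P$ cannot occur in a projective cover of $N$. With this one inducts along the projective resolutions of the injectives, not along the coresolution of $A_A$. Once $\idim\Omega^r(I)\le r$ (which follows from degree $r-1$), every summand $P$ of $P_r(I)$ has $\pdim\psi^{-1}(P)\le r$. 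The resolution of $\psi^{-1}(P)$, which ends in $P$, then gives $\idim P\le r$. This yields $\idim P_r(D(A))\le r$ for all $r$, i.e.\ the Auslander condition for $A^{\op}$, which \cite[Theorem 3.7]{FGR} transfers to $A$.

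Step 3 likewise only names the goal $\gr\Ext^{g_S}_A(S,A)\geq g_S$. Routing (2) through (3) is what the paper does, but it needs actual input. Using the truncated dual complex $\operatorname{coker}(P_{g-1}(R)^*\to P_g(R)^*)$, one shows that if $P(h(R))$ is a summand of $P_d(I(S))$, then $d=g_R$ and $\Hom_A(R,I(S))\neq 0$, so $R\cong S$. Surjectivity of $h$ then gives $P_d(I(S))\cong P(h(S))^{m_S}$. An epimorphism $D\Hom_A(S,I(S))\to\Ext^d_A(I(S),h(S))$ gives $m_S\dim_K\End_A(h(S))\le\dim_K\End_A(S)$. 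Multiplying these inequalities around each cycle of $h$ forces $m_S=1$.
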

The direction that (1) implies (2) was already proven by Iyama in \cite{I2} and the equality \begin{equation*}
 \psi(I(S))=P(h(S))
\end{equation*} was proven in \cite{KMT}. The remaining implications are new and the equivalence of (1) and (3) was conjectured first by Marczinzik around 2022 \cite[Conjecture 1.2.4]{KM} and first proven for general monomial algebras by Kl\'asz in \cite{K}. 
We remark that recently, the Auslander-Reiten bijection for Auslander-Gorenstein algebras of finite global dimension was generalised to a wider class of acyclic quiver algebras, we refer for example to \cite{D+,KMT,KKM}.
We give three applications of our main result.
The first application gives a linear algebra criterion for the Auslander-Gorenstein condition for a certain class of acyclic quiver algebras, we refer to section 3 for details.
\begin{theorem} \label{C=PUtheorem}
\label{thm::C=PU_iff_ARbij}
    Let $A$ be a naturally labelled algebra that satisfies property $\circledast$. Then the following are equivalent:
    \begin{enumerate}[\rm (i)]
\item $A$ is Auslander regular.
        \item There exists a Bruhat decomposition of the Coxeter matrix  $C_A=U_1PU_2$ where $U_1=Id.$
    \end{enumerate}
In this case, the Coxeter permutation coincides with the Auslander-Reiten permutation.
\end{theorem}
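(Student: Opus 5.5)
By the main theorem above, and since a naturally labelled algebra is an acyclic quiver algebra and so has finite global dimension, Auslander regularity of $A$ is equivalent to $A$ having a well-defined Auslander-Reiten bijection $\psi$. I will therefore prove the equivalence of (ii) with ``$\psi$ is well defined and bijective''. This turns a homological condition into one about the last terms of the minimal projective resolutions of the indecomposable injectives.

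\textbf{Step 1: read the resolutions off the Cartan and Coxeter matrices.} Because the labelling is natural, the Cartan matrix $C$ is unitriangular. Hence every indecomposable projective $P_i$ and every indecomposable injective $I_j$ has a well-defined class in the Grothendieck group, and the projectives form a basis. The minimal projective resolution $0\to P_{d}(I_j)\to\cdots\to P_0(I_j)\to I_j\to 0$ gives
\begin{equation*}
[I_j]=\sum_{k\ge 0}(-1)^k[P_k(I_j)].
\end{equation*}
So the $j$-th column of $C^{-1}C^{T}$, i.e.\ of $C_A$ up to the sign and transpose conventions fixed in section 3, is the coefficient vector of this alternating sum in the projective basis. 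Along a minimal resolution over a naturally labelled algebra, the labels of the summands of $P_{k+1}(I_j)$ are strictly larger (in the order given by the labelling) than some label occurring in $P_k(I_j)$. I expect property $\circledast$ is exactly what guarantees that the summands of the last term $P_{d_{I_j}}(I_j)$ are not cancelled by earlier terms. Consequently they appear as the extremal nonzero entries of the $j$-th column, meaning the largest index in the appropriate direction, with sign $(-1)^{d_{I_j}}$.

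\textbf{Step 2: translate $U_1=\mathrm{Id}$ into a statement about pivots.} A factorisation $C_A=PU_2$, with $P$ a permutation matrix and $U_2$ upper triangular and invertible, exists if and only if the extremal pivots of the columns lie in pairwise distinct rows and each pivot column carries a single such extremal entry. This is the usual ``Gaussian elimination without row operations from below'' criterion. In that case $P$ records these pivot positions. If $\psi$ is a well-defined bijection, then by Step 1 the extremal entry of column $j$ is the single index $\psi(I_j)$, and these indices are distinct. Building $U_2:=P^{-1}C_A$ column by column then shows it is upper triangular, which gives (i)$\Rightarrow$(ii). Conversely, suppose $C_A=PU_2$. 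Then each column has a unique extremal nonzero entry, and distinct columns have extremal entries in distinct rows. By the no-cancellation part of Step 1, $P_{d_{I_j}}(I_j)$ must then be indecomposable, and $\psi$ is injective, hence bijective. This gives (ii)$\Rightarrow$(i). The same comparison shows that the Coxeter permutation $P$ coincides with $\psi$, which is the final claim.

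\textbf{Main obstacle.} The crux is the no-cancellation statement in Step 1. Two things must be checked: that the extremal support of the alternating sum is exactly the support of the top term $P_{d_{I_j}}(I_j)$, and that multiplicities are preserved, so that an entry of absolute value $1$ forces indecomposability. I would first try to prove this from $\circledast$ by induction on the length of the resolution. At each step I would compare the largest labels occurring in $P_k(I_j)$ with those in $P_{k+1}(I_j)$, using minimality of the resolution (its differentials map into the radical) together with the triangularity of $C$.
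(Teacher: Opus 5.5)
Your opening reduction is fine, but the argument applies the main theorem to $A$ when it should be applied to $A^{op}$, and as set up it does not close.

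\textbf{The hypothesis controls the other side.} Property $\circledast$ is a condition on the minimal \emph{injective coresolutions of the indecomposable projective} $A$-modules. Applying $D$, these are the minimal projective resolutions of the indecomposable injective $A^{op}$-modules. So $\circledast$ says two things. First, the Auslander--Reiten map of $A^{op}$ is well defined. Second, the label $x_i$ of the last term $I(x_i)$ of the coresolution of $P(i)$ is strictly smaller than every other label occurring. That second fact is exactly a no-cancellation statement, and it needs no induction. But $\circledast$ says nothing about the minimal projective resolutions of injective $A$-modules, which is what your Step 1 needs. The label monotonicity you invoke only says each label in $P_{k+1}(I_j)$ exceeds \emph{some} label in $P_k(I_j)$. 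That does not stop a label of the last term from also occurring earlier and cancelling.

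\textbf{The matrix translation lands on the wrong factorisation.} In the paper's notation $W$ is the Cartan matrix. The matrix whose columns are the classes $[I_j]$ in the projective basis is $W^{-1}W^T=-(C_A^T)^{-1}$. So an inverse is involved, not only sign and transpose conventions. Distinct bottommost entries in its columns is equivalent to a factorisation $C_A=LQ$, with $L$ lower triangular and $Q$ a permutation matrix. That is a priori a different condition from (ii). Your Step 2 criterion is also not the right one: $C_A=PU_2$ holds iff the leftmost nonzero entries of the rows of $C_A$ lie in pairwise distinct columns.

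\textbf{How to repair it.} Run your argument for $A^{op}$, which is the paper's route.
\begin{enumerate}
\item Since $-C_A^T=(W^T)^{-1}W$, the $i$-th column of $-C_A^T$ is $[P(i)]=\sum_k(-1)^k[I^k(P(i))]$ written in the basis of injectives.
\item $C_A=PU_2$ holds iff the topmost nonzero entries of these columns lie in pairwise distinct rows.
\item Under $\circledast$, that entry sits in row $x_i$ and equals $\pm1$. So (ii) is equivalent to $i\mapsto x_i$ being injective, i.e.\ to $A^{op}$ having a well-defined Auslander--Reiten bijection. This is \cite[Theorem 4.5]{KKM}, which the paper simply cites.
\item Theorem \ref{thm:mainresult} applied to $A^{op}$, together with Theorem \ref{FGRandARresults} and finite global dimension, then gives (i). The same chain read backwards gives (i)$\Rightarrow$(ii).
\end{enumerate}
The permutation you read off this way is the Auslander--Reiten permutation of $A^{op}$. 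Identifying it with that of $A$, as the final claim requires, still needs an argument; the paper relies on \cite[Theorem 4.6]{KKM} for this.
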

The proof combines our main result with the results obtained in \cite[Section 4]{KKM}. 

As another application of our main result, we can give a new and much faster proof of the following result from \cite{IM}:
\begin{theorem}
Let $L$ be a finite lattice with incidence algebra $A$. If $A$ is Auslander-Gorenstein, then $L$ is distributive.
\end{theorem}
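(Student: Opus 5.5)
By the main theorem, if the incidence algebra $A$ of $L$ is Auslander-Gorenstein, then $A$ has a well-defined Auslander-Reiten bijection $\psi$. So I would prove the contrapositive: if $L$ is not distributive, then $\psi$ fails to be well defined or fails to be bijective. This needs an explicit description of the minimal projective resolutions of the indecomposable injective $A$-modules.

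\textbf{Setting up the modules.} I fix conventions so that the indecomposable projective $P(y)$ is the interval module supported on the principal upset $\upset{y}$, and the indecomposable injective $I(x)$ is the interval module supported on $\downset{x}$. Then $I(\hat 1)=P(\hat 0)$ is projective. For $x\neq \hat 1$ we have a short exact sequence
\[ 0\to U_x \to P(\hat 0)\to I(x)\to 0, \]
where $U_x$ is the module supported on the upset $L\setminus \downset{x}$. Let $m_1,\dots,m_k$ be the minimal elements of $L\setminus\downset{x}$. Since $L$ is a lattice, $\upset{m_{i_1}}\cap\dots\cap \upset{m_{i_r}}=\upset{(m_{i_1}\vee\dots\vee m_{i_r})}$. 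This gives a Taylor/\v{C}ech-type resolution of $U_x$ by the modules $P(m_{i_1}\vee\dots\vee m_{i_r})$ in homological degree $r-1$. It is not minimal in general, but the multiplicity of $P(y)$ in degree $i$ of the minimal resolution of $I(x)$ is $\dim \Ext^i_A(I(x),S(y))$. By the crosscut theorem, this equals a reduced homology group of the crosscut complex formed by those $m_j$ with $m_j\le y$. In particular, only elements $y$ that are joins of some of the $m_j$ can contribute.

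\textbf{Distributive case as a guide.} When $L$ is distributive, the $m_j$ restricted to any $y$ behave like a Boolean configuration. The resolution is then already minimal, and its last term is the single projective $P(m_1\vee\dots\vee m_k)$. This shows what the condition ``$\psi$ well-defined and bijective'' enforces: for every $x$, the top nonvanishing degree of the resolution must be concentrated in exactly one $y$, with one-dimensional homology, and the resulting assignment $x\mapsto y$ must be injective.

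\textbf{Non-distributive case.} By Birkhoff's theorem, $L$ contains a sublattice isomorphic to $M_3$ or $N_5$. I would pick $x$ adapted to such a sublattice, for instance $x$ a suitable element (possibly a coatom) below which the sublattice sits badly. The goal is to show that either the top homology of the relevant crosscut complex has dimension at least $2$ or is spread over two different $y$'s, so that $P_{d_{I(x)}}(I(x))$ is decomposable, or that two distinct $x$ produce the same last term, so that $\psi$ is not injective. The main obstacle is exactly this step. The sublattice need not be an interval, so the crosscut complexes of $L$ around it are not controlled directly. If this becomes unwieldy, I would switch to a counting argument. Here $\psi$ bijective means that each $P(y)$ occurs exactly once as a last term. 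I would compare this with Möbius function identities: the alternating sum of multiplicities is $\mu$, and $\mu(z,y)\neq 0$ only for joins of atoms of the interval $[z,y]$. The aim is to show that bijectivity forces every interval to have a Boolean atom structure, which characterises distributivity.
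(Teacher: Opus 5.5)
\textbf{The gap: the non-distributive case is never carried out.} Your reduction to the Auslander--Reiten bijection, the Taylor-type resolution of $U_x$, the identification of multiplicities with $\dim\Ext^i_A(I(x),S(y))$, and the distributive computation are all fine. But the proof stops exactly where its content lies: for non-distributive $L$ you only state an aim, and neither of your routes works as stated.

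Consider $L=\{\emptyset,\{1\},\{3\},\{1,2\},\{1,3\},\{2,3\},\{1,2,3\}\}$, ordered by inclusion, so join is union. It is not distributive, since $\{\emptyset,\{1\},\{1,2\},\{2,3\},\{1,2,3\}\}$ is a sublattice isomorphic to $N_5$.
\begin{itemize}
\item A direct computation gives an indecomposable last term for every $I(x)$. So the failure is not a ``bad'' top homology group but non-injectivity: $I(\{1\})$, $I(\{3\})$ and $I(\{1,3\})$ all end in $P(\{1,2,3\})$, while $P(\{1,2\})$ and $P(\{2,3\})$ are never hit. Your sketch gives no mechanism that produces such a collision from an $N_5$ or $M_3$ sublattice.
\item In this $L$, every interval $[z,y]$ with $y$ a join of atoms of $[z,y]$ is Boolean. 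Equivalently, $\mu(z,y)\neq 0$ only on Boolean intervals; $L$ is join-distributive. So the Boolean atom structure you hope to extract from a M\"obius count does not characterise distributivity.
\item In any case, an alternating sum of multiplicities cannot see whether a top-degree term is indecomposable.
\end{itemize}

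\textbf{The missing idea: look at the preimage under $\psi$ of $P(j)$ for a join-irreducible $j$.} Note that the unhit vertices above are precisely the join-irreducible elements that are not join-prime. Your own set-up handles this at once:
\begin{itemize}
\item $P(j)$ can occur only if $j$ is a join of some of the $m_i$.
\item Since $j$ is join-irreducible and the $m_i$ form an antichain, the only subset of the $m_i$ with join $j$ is $\{j\}$ itself.
\item Hence $P(j)$ occurs only in degree $1$ of the minimal resolution of $I(x)$.
\end{itemize}
The paper obtains the same from $\idim S(j)=1$, read off from $0\to S(j)\to I(j)\to I(j_*)\to 0$, together with Lemma~\ref{lem:minimal-ext-simple}.

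Therefore $\psi(I(x))=P(j)$ forces $\pdim I(x)=1$ and $U_x\cong P(j)$, i.e.\ $L\setminus\downset{x}=\upset{j}$. Now suppose $j\le a\vee b$ with $j\not\le a$ and $j\not\le b$. Then $a,b\le x$, so $j\le a\vee b\le x$, contradicting $j\notin\downset{x}$. Thus every join-irreducible element is join-prime, which characterises finite distributive lattices. No Birkhoff sublattice analysis or M\"obius counting is needed.
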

In forthcoming work we will also prove, using our new characterisation of Auslander-Gorenstein algebras,
that a certain class of posets that contains distributive lattices has Auslander-Gorenstein incidence algebras, which combined with our previous theorem generalises the main result of \cite{IM}.

A very practical application of our main result is that it allows one to test the Auslander-Gorenstein property in a much faster way than before using computer software such as the GAP package \cite{QPA}. We used this to search in a large class of algebras for an answer to the following question of Iyama in \cite[Section 3.6.2]{I3}
\begin{question}
Let $A$ be a diagonal Auslander regular algebra. Is then $A^{op}$ also diagonal Auslander regular?
\end{question}
We refer to section 3 for precise definitions.
We found a counterexample to this question by a combination of a large search for quiver algebras with a given Cartan matrix using \cite{Sage} and \cite{QPA} and ChatGPT Pro to find quiver algebras with those given Cartan matrices. We refer to section 3 for more details and the precise negative answer to Iyama's question, which is an acyclic quiver algebra with 9 simple modules.

\section{Preliminaries}

Throughout, $K$ denotes a field and $A$ a finite dimensional $K$-algebra. Unless explicitly stated otherwise, all modules are finitely generated right $A$-modules. We refer for example to the standard textbooks \cite{ARS,ASS,Ben,Kr,SY} on representation theory and homological algebra of finite dimensional algebras for the necessary background. We write $D=\Hom_K(-,K)$ and $M^*=\Hom_A(M,A)$ for a right $A$-module $M$. For a simple module $S$, we denote its projective cover by $P(S)$ and its injective envelope by $I(S)$. Recall that $\top P(S)\cong S$ and $\soc I(S)\cong S$, and that these assignments give bijections between the isomorphism classes of simple modules and those of indecomposable projective and indecomposable injective modules, respectively.

For a module $M$, we fix a minimal projective resolution
\[
 \cdots\longrightarrow P_2(M)\longrightarrow P_1(M)
 \longrightarrow P_0(M)\longrightarrow M\longrightarrow 0
\]
and write $P(M)=P_0(M)$. The terms in a minimal injective coresolution of $M$ are denoted by $I^i(M)$, with $I(M)=I^0(M)$. The \emph{grade} of $M$ is
\[
 \gr_A M=\inf\{i\geq 0\mid \Ext_A^i(M,A)\neq 0\},
\]
where the infimum of the empty set is understood to be infinity. We omit the subscript $A$ when there is no danger of confusion.
The next three lemmas are well-known and for the convenience of the reader we will sketch a proof for each of them.

\begin{lemma}\label{lem:duality-evaluation}
Let $P$ be a finitely generated projective right $A$-module, let $X$ be a right $A$-module, and let $L$ be a left $A$-module. There are natural isomorphisms of $K$-vector spaces
\[
 X\otimes_A P^*\cong\Hom_A(P,X)
 \qquad\text{and}\qquad
 \Hom_A(X,DL)\cong D(X\otimes_A L).
\]
\end{lemma}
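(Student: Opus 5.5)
For the first isomorphism I will define the evaluation map
\[
\theta_{P,X}\colon X\otimes_A P^*\to\Hom_A(P,X),\qquad x\otimes f\mapsto \bigl(p\mapsto x f(p)\bigr).
\]
Here $P^*=\Hom_A(P,A)$ is a left $A$-module via $(af)(p)=af(p)$. The first checks are that $\theta_{P,X}$ is well defined, i.e.\ balanced: $xa\otimes f$ and $x\otimes af$ both go to $p\mapsto xaf(p)$. It is also natural in $X$ and in $P$. For $P=A_A$ we have $A^*\cong {}_AA$ via $f\mapsto f(1)$, and $\theta_{A,X}$ becomes the composite of the canonical isomorphisms $X\otimes_A A\cong X\cong\Hom_A(A,X)$. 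So $\theta_{A,X}$ is an isomorphism.

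Both sides are additive in $P$: $(P\oplus Q)^*\cong P^*\oplus Q^*$, tensor commutes with finite direct sums, and $\Hom_A(P\oplus Q,X)\cong\Hom_A(P,X)\oplus\Hom_A(Q,X)$. Hence $\theta_{A^n,X}$ is an isomorphism. A finitely generated projective $P$ is a direct summand of some $A^n$. The map $\theta_{P,X}$ is then a direct summand of the isomorphism $\theta_{A^n,X}$, by naturality applied to the inclusion and the projection, and is therefore itself an isomorphism.

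The second isomorphism is tensor--hom adjunction, with $D=\Hom_K(-,K)$ and $DL$ a right $A$-module via $(\varphi a)(l)=\varphi(al)$. I will send $\alpha\in\Hom_A(X,DL)$ to the functional $x\otimes l\mapsto \alpha(x)(l)$. This is balanced because $\alpha(xa)(l)=(\alpha(x)a)(l)=\alpha(x)(al)$. The inverse sends $\beta\in D(X\otimes_A L)$ to $x\mapsto \beta(x\otimes -)$, and that map is $A$-linear by the same computation. That the two assignments are mutually inverse and natural in $X$ and $L$ is immediate from the formulas.

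The only step needing any care is the reduction from free to projective modules in the first isomorphism, which is where finite generation of $P$ is used. Naturality of $\theta$ in $P$ makes it routine. Since the paper only sketches these standard facts, I would present these verifications briefly.
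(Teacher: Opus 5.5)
Your proposal is correct and matches the paper's proof: the same evaluation map $x\otimes f\mapsto(p\mapsto xf(p))$, shown to be an isomorphism for finite free modules and then for their direct summands, and the same tensor--Hom adjunction $\alpha\mapsto(x\otimes\ell\mapsto\alpha(x)(\ell))$. You add the routine checks the paper only sketches, namely balancedness, the module structures on $P^*$ and $DL$, the case $A_A$, and the retract argument via naturality in $P$.
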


\begin{proof}
The first map sends $x\otimes f$ to the homomorphism $p\mapsto xf(p)$. It is an isomorphism for a finite free module and hence also for each of its direct summands. The second is the usual tensor--Hom adjunction followed by $K$-duality; explicitly, a homomorphism $\alpha\colon X\to DL$ is sent to the functional $x\otimes \ell\mapsto\alpha(x)(\ell)$.
\end{proof}

\begin{lemma}\label{lem:minimal-ext-simple}
Let $M$ be a module and let $L$ be simple. Then the differentials in the complex $\Hom_A(P_\bullet(M),L)$ are zero. Consequently,
\[
 \Ext_A^i(M,L)\cong\Hom_A(P_i(M),L)
\]
for every $i\geq 0$.
\end{lemma}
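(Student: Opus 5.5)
The argument rests on minimality of the resolution: the image of each differential $d_{i+1}\colon P_{i+1}(M)\to P_i(M)$ lies in $\rad P_i(M)=P_i(M)\rad A$. We then check that every homomorphism to a simple module kills the radical, and read off the cohomology.

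First I fix the minimal projective resolution $P_\bullet(M)\to M$. Minimality means $P_0(M)\to M$ is a projective cover and each $P_{i+1}(M)\to \Omega^{i+1}M$ is a projective cover. The standard consequence I will use is that the kernel of a projective cover $P\to N$ is superfluous, hence lies in $\rad P$. Therefore
\[
\operatorname{im} d_{i+1}=\Omega^{i+1}M=\ker(P_i(M)\to \Omega^{i}M)\subseteq \rad P_i(M)
\]
for all $i\geq 0$, with $\Omega^0M=M$.

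Next, let $f\in\Hom_A(P_i(M),L)$ with $L$ simple. Since $L\rad A=0$, we get $f(P_i(M)\rad A)=f(P_i(M))\rad A=0$, so $f$ vanishes on $\rad P_i(M)$. The induced differential in the complex $\Hom_A(P_\bullet(M),L)$ is $f\mapsto f\circ d_{i+1}$. Because $\operatorname{im} d_{i+1}\subseteq\rad P_i(M)$, this composite is $0$, so all differentials vanish.

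Finally, $\Ext_A^i(M,L)$ is by definition the $i$-th cohomology of $\Hom_A(P_\bullet(M),L)$. With zero differentials, every cochain is a cocycle and only $0$ is a coboundary. Hence $\Ext_A^i(M,L)\cong\Hom_A(P_i(M),L)$ for every $i\geq 0$, naturally.

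The only step needing care is the first: that minimality forces $\operatorname{im} d_{i+1}\subseteq \rad P_i(M)$. This is the standard fact that projective covers have superfluous kernels, and that superfluous submodules of a finitely generated module lie in its radical, which I will simply cite.
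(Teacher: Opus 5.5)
Your proposal is correct and follows the paper's own proof: minimality puts $\operatorname{im}(P_{i+1}(M)\to P_i(M))$ inside $\rad P_i(M)$, every map to the simple module $L$ vanishes on that radical, and so all differentials of $\Hom_A(P_\bullet(M),L)$ are zero. You spell out the superfluous-kernel justification and the identity $L\rad A=0$ a bit more explicitly than the paper does, but the argument is the same.
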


\begin{proof}
Minimality gives $\operatorname{im}(P_{i+1}(M)\to P_i(M))\subseteq\rad P_i(M)$. Every homomorphism from $P_i(M)$ to the simple module $L$ vanishes on $\rad P_i(M)$. Hence composition with each differential is zero, and the assertion follows by taking the cohomology of $\Hom_A(P_\bullet(M),L)$.
\end{proof}

\begin{lemma}\label{lem:top-ext-epimorphism}
Let $X$ be a module of finite projective dimension $d$. If $Y\to Z$ is an epimorphism, then the induced map
\[
 \Ext_A^d(X,Y)\longrightarrow\Ext_A^d(X,Z)
\]
is an epimorphism.
\end{lemma}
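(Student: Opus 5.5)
The plan is to use that $\Ext_A^{d+1}(X,-)$ vanishes identically because $\pdim X = d$, and then read off surjectivity from the long exact sequence. Let $Y \to Z$ be an epimorphism with kernel $W$, giving a short exact sequence
\[
0 \longrightarrow W \longrightarrow Y \longrightarrow Z \longrightarrow 0 .
\]
Applying $\Hom_A(X,-)$ produces the long exact sequence, whose relevant segment is
\[
\Ext_A^d(X,Y)\longrightarrow\Ext_A^d(X,Z)\longrightarrow\Ext_A^{d+1}(X,W).
\]
Since $X$ has a projective resolution of length $d$, the group $\Ext_A^{d+1}(X,W)$ is zero. By exactness, the first map is therefore surjective.

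As a direct check without the long exact sequence, compute with a minimal projective resolution $P_\bullet(X)$ of length $d$. Then $\Ext_A^d(X,M)$ is the cokernel of
\[
\Hom_A(P_{d-1}(X),M)\longrightarrow\Hom_A(P_d(X),M).
\]
Because $P_d(X)$ is projective, the map $\Hom_A(P_d(X),Y)\to\Hom_A(P_d(X),Z)$ is surjective. Surjectivity on these terms then passes to the cokernels, since a surjection of complexes in the top degree induces a surjection on top cohomology. The only step needing any care is this last passage, and it is routine, so I expect no real obstacle. The case $d=0$ is included: there $X$ is projective and $\Hom_A(X,-)$ is exact.
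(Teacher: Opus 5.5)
Your proposal is correct and is essentially the paper's proof: the paper also takes the kernel of $Y\to Z$ and concludes from the long exact sequence, using that $\Ext_A^{d+1}(X,-)=0$ because $\pdim_A X=d$. Your second argument, which computes $\Ext_A^d$ as a cokernel on a resolution of length $d$, is also valid.
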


\begin{proof}
Let $K$ be the kernel of $Y\to Z$. The long exact Ext sequence contains
\[
 \Ext_A^d(X,Y)\longrightarrow\Ext_A^d(X,Z)
 \longrightarrow\Ext_A^{d+1}(X,K).
\]
The last term is zero because $\pdim_A X=d$.
\end{proof}

The following truncation of a dual projective resolution will be used in the proof of the main theorem.

\begin{lemma}\label{lem:dual-truncation}
Let $M$ be a module of finite grade $g$, let $C^i=P_i(M)^*$, and set $E=\Ext_A^g(M,A)$. Define the left $A$-module
\[
 B=\operatorname{coker}(C^{g-1}\longrightarrow C^g),
\]
where $B=C^0$ when $g=0$. Then $E$ is a submodule of $B$, and the exact sequence
\[
 0\longrightarrow C^0\longrightarrow\cdots\longrightarrow C^g
 \longrightarrow B\longrightarrow 0
\]
gives a projective resolution of $B$ of length at most $g$. Moreover, for every right $A$-module $X$ and every $1\leq d\leq g$, there is an isomorphism
\[
 \Ext_A^d(X,DB)\cong D\Ext_A^{g-d}(M,X),
\]
while $\Ext_A^d(X,DB)=0$ for $d>g$. If $X$ is projective, then
\[
 \Hom_A(X,DE)\cong D\Ext_A^g(M,X).
\]
\end{lemma}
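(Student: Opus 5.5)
We first dualise the minimal projective resolution of $M$. Applying $(-)^*=\Hom_A(-,A)$ to $P_\bullet(M)$ gives a complex of finitely generated projective left modules
\[
 0\to C^0\to C^1\to\cdots\to C^g\to C^{g+1}\to\cdots .
\]
Its cohomology in degree $i$ is $\Ext_A^i(M,A)$, which vanishes for $i<g$ because $\gr M=g$. The cohomology at $C^g$ is $E=\ker(C^g\to C^{g+1})/\operatorname{im}(C^{g-1}\to C^g)$. This is a submodule of $\operatorname{coker}(C^{g-1}\to C^g)=B$. Exactness in degrees $<g$ (for $g\ge1$; the case $g=0$ is immediate, with $B=C^0$ and $E=M^*\subseteq C^0$) shows that $0\to C^0\to\cdots\to C^g\to B\to 0$ is exact. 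This is a projective resolution of $B$ of length at most $g$, which settles the first two claims.

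For the Ext formula we dualise with $D$. The sequence $0\to DB\to DC^g\to\cdots\to DC^0\to 0$ is exact, and each $DC^i=D(P_i^*)$ is injective as a right module, since $P_i^*$ is projective. So this is an injective coresolution of $DB$ of length $g$, with $DC^{g-j}$ in cohomological degree $j$. Hence $\Ext_A^d(X,DB)=0$ for $d>g$. For $d\le g$, $\Ext_A^d(X,DB)$ is the degree-$d$ cohomology of $\Hom_A(X,DC^{g-\bullet})$.

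We identify these terms using Lemma \ref{lem:duality-evaluation}. First,
\[
 \Hom_A(X,D(P_i^*))\cong D(X\otimes_A P_i^*)\cong D\Hom_A(P_i,X),
\]
naturally in $P_i$. So the complex $\Hom_A(X,DC^{g-\bullet})$ is $D$ applied to $\Hom_A(P_{g-\bullet},X)$, and the differentials match because the isomorphisms are natural. Since $D$ is exact, the cohomology in degree $d$ is $D$ of the cohomology of $\Hom_A(P_\bullet,X)$ in degree $g-d$.

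For $1\le d\le g$ we have $0\le g-d\le g-1$. The term $\Hom_A(P_{g-d},X)$ sits strictly inside the truncated complex, so both of its neighbours $P_{g-d-1}$ and $P_{g-d+1}$ are present; the latter exists since $g-d+1\le g$. The cohomology there is therefore the genuine $\Ext_A^{g-d}(M,X)$, giving $\Ext_A^d(X,DB)\cong D\Ext_A^{g-d}(M,X)$.

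The main obstacle is this boundary bookkeeping. At $d=0$ the truncation cuts off $P_{g+1}$, so we do not get $\Ext^g$ directly from $B$. This is exactly why the last claim is phrased with $E$ rather than $B$.

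For projective $X$, the functor $\Hom_A(X,D-)\cong D(X\otimes_A -)$ is exact, because $X$ is flat. Applying it to the exact sequence
\[
 C^{g-1}\to C^g\to C^{g+1},
\]
which computes $E$ as its middle cohomology, gives
\[
 \Hom_A(X,DE)\cong H\bigl(\Hom_A(X,DC^{g+1})\to\Hom_A(X,DC^g)\to\Hom_A(X,DC^{g-1})\bigr).
\]
By the identification above, this is $D$ of the cohomology of $\Hom_A(P_{g-1},X)\to\Hom_A(P_g,X)\to\Hom_A(P_{g+1},X)$, which is $D\Ext_A^g(M,X)$. Here we need $D$ to commute with cohomology and $\Hom_A(X,-)$ to be exact; both hold since $X$ is projective.
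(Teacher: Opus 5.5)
Your proposal is correct and follows essentially the paper's argument. You dualise $P_\bullet(M)$ and use the vanishing of $\Ext_A^i(M,A)$ for $i<g$ to get the resolution of $B$, then dualise with $D$ to get an injective coresolution of $DB$ and identify its terms via Lemma \ref{lem:duality-evaluation}. For projective $X$ you pass cohomology through the exact functor $\Hom_A(X,D-)\cong D(X\otimes_A -)$; the paper phrases this last step as $X\otimes_A E\cong H^g(X\otimes_A C^\bullet)$ before dualising, which is the same argument. One small slip: $C^{g-1}\to C^g\to C^{g+1}$ is a complex with middle cohomology $E$, not an exact sequence, though that is how you actually use it.
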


\begin{proof}
The cohomology of the complex $C^\bullet=\Hom_A(P_\bullet(M),A)$ is $\Ext_A^\bullet(M,A)$. Since $g=\gr_A M$, this complex is exact in every degree smaller than $g$. Thus the displayed sequence ending in $B$ is exact. The differential $C^g\to C^{g+1}$ factors through $B$, and
\[
 \ker(C^g\to C^{g+1})/\operatorname{im}(C^{g-1}\to C^g)
 =\Ext_A^g(M,A)=E.
\]

This proves that $E$ is a submodule of $B$. By definition of $B$ and the condition that $M$ has grade equal to $g$, the exact sequence
\[
 0\longrightarrow C^0\longrightarrow\cdots\longrightarrow C^g
 \longrightarrow B\longrightarrow 0
\]
gives a projective resolution of $B$ of length at most $g$. Since every $C^i$ is projective as a left module, every $DC^i$ is injective as a right module, so dualising gives an injective resolution of $DB$.

For $1\leq d\leq g$, the term in degree $d$ of this injective resolution is $DC^{g-d}$. Lemma \ref{lem:duality-evaluation} gives, term by term,
\[
 \Hom_A(X,DC^{g-d})\cong D(X\otimes_A C^{g-d})
 \cong D\Hom_A(P_{g-d}(M),X),
\] using here that every projective module $P$ is reflexive, so that we have $P^{**} \cong P$.
These isomorphisms identify the complex obtained after applying $\Hom_A(X,-)$ with the $K$-dual of $\Hom_A(P_\bullet(M),X)$, read in the reverse order. Since $d\geq 1$, the truncation at $C^g$ does not change the cohomology in degree $g-d$. As $K$-duality is exact, cohomology in degree $d$ is therefore the dual of the cohomology in degree $g-d$. This gives the claimed isomorphism. The vanishing for $d>g$ follows from the length of the injective resolution.

Finally, assume that $X$ is projective. Then $X\otimes_A-$ is exact and therefore commutes with the cohomology of $C^\bullet$. Lemma \ref{lem:duality-evaluation} gives
\[
 X\otimes_A E\cong H^g(X\otimes_A C^\bullet)
 \cong H^g(\Hom_A(P_\bullet(M),X))
 =\Ext_A^g(M,X).
\]
Applying the second isomorphism in Lemma \ref{lem:duality-evaluation} completes the proof.
\end{proof}

\begin{theorem} \label{FGRandARresults}
Let $A$ be a finite dimensional algebra.
\begin{enumerate}
    \item If $A$ satisfies the Auslander condition, then so does $A^{op}$.
    \item If $A$ satisfies the Auslander condition and $\idim A_A < \infty$, then $A$ is Iwanaga-Gorenstein.
\end{enumerate}
\end{theorem}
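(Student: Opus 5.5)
These are classical results: (1) is due to Fossum--Griffith--Reiten, in the form of Auslander's theorem that the $n$-Gorenstein condition is left-right symmetric, and (2) is due to Auslander--Reiten. The plan is to reduce both to standard homological characterisations. For finite dimensional algebras the flat dimension equals the projective dimension, so the $n$-Gorenstein condition reads $\pdim I^i(A_A)\le i$ for $0\le i<n$.

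For (1), I will use Auslander's characterisation: $A$ is $n$-Gorenstein if and only if, for every finitely generated left or right module $M$ and every $1\le i\le n$, every submodule $N\subseteq\Ext_A^i(M,A)$ satisfies $\gr N\ge i$. The key steps, in order, are as follows.

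\begin{enumerate}
\item Show that $\pdim I^i\le i$ for $i<n$ implies $\Ext^j(-,A)$ vanishes in low degrees on submodules of $\Ext^i(M,A)$. To do this, write $\Ext^i(M,A)$ as a subquotient of the dual of a projective resolution of $M$. Then compute $\Ext^j(N,A)$ through the injective coresolution $I^\bullet$ of $A$, using the isomorphism $\Hom_A(N,I)\cong D(N\otimes D I)$ for injective $I$, together with the flat (here projective) dimension bound on $DI^j$.
\item Prove the converse. A condition on submodules of Ext modules over $A^{op}$ forces $\pdim I^i(A_A)\le i$. The argument tests against the simple modules $S$, for which $\Hom_A(S,I^i)\neq0$, and dualises using Lemma~\ref{lem:duality-evaluation} and Lemma~\ref{lem:dual-truncation}.
\item Observe that the grade condition is manifestly symmetric once it is stated for all modules on both sides. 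Therefore $A$ is $n$-Gorenstein if and only if $A^{op}$ is, and (1) follows by letting $n$ range over all $n\ge1$.
\end{enumerate}

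For (2), suppose $A$ satisfies the Auslander condition and $\idim A_A=d<\infty$. We need $\idim {}_AA<\infty$. It is standard that for finite dimensional algebras, if both injective dimensions are finite then they coincide, so finiteness is enough. I will argue with $n=d+1$ and the grade condition from (1).

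\begin{enumerate}
\item Let $M$ be a finitely generated left module. Every nonzero $\Ext^i_{A^{op}}(M,A)$ has all its nonzero submodules of grade at least $i$ over $A$ (the right side).
\item Since $\idim A_A=d$, every nonzero right module $N$ has $\gr N\le d$, because $\Ext^j(N,A)=0$ for all $j>d$ would force $N^{**}$-type contradictions. This is the Iwanaga--Gorenstein grade finiteness on one side.
\item Hence $\Ext^i_{A^{op}}(M,A)=0$ for all $i>d$ and all $M$, giving $\idim {}_AA\le d$.
\end{enumerate}

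The main obstacle is step (2) in the argument for (2): showing that a nonzero right module has grade at most $d$ using only $\idim A_A=d$. I would handle this by taking $N$ nonzero and a simple submodule $S\subseteq N$, and noting that $S$ embeds in some $I^j(A_A)$ with $j\le d$, since $DA$ occurs in the injective coresolution of $A$ under the Auslander condition. From this I would derive $\Ext^j(S,A)\ne0$. Then I would lift to $N$ using the Auslander condition, which guarantees that grade behaves well along submodules. If this direct route fails, I will fall back on citing \cite{FGR} and \cite{AR} for both statements, as the paper clearly intends.
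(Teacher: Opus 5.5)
Both parts have a genuine gap, and in each case it sits exactly where the cited theorems do their work.

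\textbf{Part (1).} Your steps 1 and 2 are one-sided. The duality you actually need is $\Hom_A(\Ext^i_{A^{op}}(M,A),I^j)\cong \operatorname{Tor}_i^A(I^j,M)\cong D\Ext^i_{A^{op}}(M,DI^j)$, and the relevant bound is $\idim DI^j=\pdim I^j\le j$. The flat dimension of the projective left module $DI^j$ is zero and plays no role.

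This argument and its converse show only one thing: $\pdim I^i(A_A)\le i$ for $i<n$ is equivalent to the grade condition on submodules of $\Ext^i_{A^{op}}(M,A)$ for \emph{left} modules $M$. Those submodules are right modules, and their grade is computed from $I^\bullet(A_A)$. The mirror argument ties $\pdim I^i({}_AA)\le i$ to the condition for \emph{right} modules $M$.

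Writing ``for left or right modules'' just forms the conjunction of two a priori unrelated conditions, so step 3 is not a formality. Deducing the right-module condition from the left-module one is the substantive part of Auslander's theorem \cite[Theorem 3.7]{FGR}, and nothing in your outline addresses it. If you instead take the two-sided characterisation as known, you are already citing the result.

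\textbf{Part (2).} The reduction itself is fine and needs no ``lifting to $N$''. If $E=\Ext^i_{A^{op}}(M,A)\neq 0$ with $i>d$, then a simple submodule $S\subseteq E$ has $\gr S\ge i>d$. Since $\idim A_A=d$, this gives $\Ext^\ast_A(S,A)=0$.

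The contradiction then requires every simple right module to have finite grade. Equivalently, every indecomposable injective must occur in $I^\bullet(A_A)$, which is an instance of the generalized Nakayama conjecture. Your ``$N^{**}$-type contradiction'' does not establish this. Note that vanishing of $\Ext^j_A(N,A)$ for $j>d$ is automatic; the whole issue is $j\le d$.

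In fact, under your hypotheses this input is equivalent to the conclusion. Your argument gives one direction. Conversely, suppose $\Ext^\ast_A(S,A)=0$. Then $P_\bullet(S)^*$ is exact, so $C_k=\operatorname{coker}(P_{k-1}(S)^*\to P_k(S)^*)$ has the projective resolution $0\to P_0(S)^*\to\cdots\to P_k(S)^*\to C_k\to 0$. Dualising this resolution gives $\Ext^k_{A^{op}}(C_k,A)\cong S\neq 0$ for every $k\ge 1$, hence $\idim {}_AA=\infty$.

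So your ``main obstacle'' is statement (2) itself. Invoking that ``$DA$ occurs in the injective coresolution under the Auslander condition'' assumes the missing input rather than proving it. Your fallback, citing \cite{FGR} and \cite{AR}, is what actually proves the theorem, and it is exactly what the paper does.
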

\begin{proof}
For (1), see \cite[Theorem 3.7]{FGR} and for (2), see \cite[Corollary 5.5 (b)]{AR}.
\end{proof}

\section{Equivalent characterisations for Auslander-Gorenstein algebras}
In this section, we prove the main result of this article:
\begin{theorem} \label{thm:mainresult}
Let $A$ be a finite dimensional algebra. Then the following are equivalent:
\begin{enumerate}
    \item $A$ is Auslander-Gorenstein.
    \item $A$ is Iwanaga-Gorenstein and has a well-defined grade bijection $h$.
    \item $A$ has a well-defined Auslander-Reiten bijection $\psi$.
\end{enumerate}
In this case, we have 
\begin{equation*}
 \psi(I(S))=P(h(S)).
\end{equation*}
\end{theorem}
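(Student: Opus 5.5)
We already know (1)$\Rightarrow$(2) from \cite{I2}, (1)$\Rightarrow$(3) from \cite{AR}, and the identity $\psi(I(S))=P(h(S))$ from \cite{KMT}. So it remains to prove (3)$\Rightarrow$(1) and (2)$\Rightarrow$(1).

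\textbf{Reduction to inequalities.} For a simple $S$ write $d_S=\pdim I(S)$ and $g_S=\gr S$. By Lemma \ref{lem:minimal-ext-simple}, applied dually to the minimal injective coresolution of $A_A$, we have $\Ext^i_A(S,A)\neq 0$ exactly when $I(S)$ is a summand of $I^i(A_A)$. Hence the Auslander condition for $A$ is equivalent to
\[
 d_S\leq g_S \quad\text{for every simple } S.
\]
Suppose this holds. Under (3) every indecomposable injective has finite projective dimension, so $\pdim D(A)_A<\infty$, i.e. $\idim {}_AA<\infty$. By Theorem \ref{FGRandARresults}(1), $A^{op}$ also satisfies the Auslander condition. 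Theorem \ref{FGRandARresults}(2) applied to $A^{op}$ then makes $A$ Iwanaga-Gorenstein, hence Auslander-Gorenstein. Under (2) the Iwanaga-Gorenstein property is assumed, so in both cases only the inequalities remain.

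\textbf{The reverse inequality.} Fix $S$ with $g=g_S$ and let $B$, $E\subseteq B$ be as in Lemma \ref{lem:dual-truncation} for $M=S$. For an injective $X$, that lemma gives
\[
 \Ext^d_A(X,DB)=0 \quad\text{for } 1\leq d<g,\qquad \Ext^g_A(X,DB)\cong D\Hom_A(S,X).
\]
Taking $X=I(S)$ shows $\Ext^{g}_A(I(S),DB)\neq 0$, hence $d_S\geq g_S$ whenever $g_S\ge 1$. It also shows that the injectives $I(S')$ with $S'\neq S$ have vanishing $\Ext^{g}(-,DB)$. The case $g=0$ is handled separately via the summand $E=S^*\subseteq C^0$.

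\textbf{The equality $d_S=g_S$ (main obstacle).} I will write $d=d_S$ and $P_d=\psi(I(S))$.
\begin{enumerate}[\rm (a)]
\item By Lemma \ref{lem:minimal-ext-simple} and Lemma \ref{lem:top-ext-epimorphism}, $\Ext^d(I(S),A)\to\Ext^d(I(S),T)=\Hom(P_d,T)$ is onto for every simple $T$. So $\Ext^d(I(S),A)$ is nonzero, and its top is dual to $\top P_d$.
\item I would compare this with $\Ext^{g}(I(S),DB)$ and with $E=\Ext^g(S,A)$. Lemma \ref{lem:dual-truncation} with $X$ projective gives $\Hom(X,DE)\cong D\Ext^g(S,X)$. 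Taking $X$ indecomposable projective, this identifies $h(S)$ through $\soc DE$.
\item The aim is to show that the simple detected by $\psi$ at degree $d_S$ coincides with the one detected by $h$ at degree $g_S$, namely the top of $P_d$ equals $h(S)$, and that a strict inequality $d_S>g_S$ would make two distinct simples produce the same indecomposable projective.
\end{enumerate}
If (c) works, injectivity of $\psi$ (respectively of $h$) contradicts $d_S>g_S$. Concretely, I would order the simples by $g_S$ and induct on the grade. At the minimal grade where $d_S>g_S$ fails to be excluded, I would count how many simples the degree-$g$ socle of $DB$ must receive, and use that $\psi$ is a bijection. This pigeonhole should force a collision.

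I expect this comparison of the two degrees $d_S$ and $g_S$ to be the hard step. I would first try to make it rigorous through the epimorphism of Lemma \ref{lem:top-ext-epimorphism} applied to $A\twoheadrightarrow\top A$ together with the vanishing range of Lemma \ref{lem:dual-truncation}.
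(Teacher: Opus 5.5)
Your framework is sound. By the dual of Lemma \ref{lem:minimal-ext-simple}, $\Ext^i_A(S,A)\cong\Hom_A(S,I^i(A_A))$, so the Auslander condition is exactly $d_S\le g_S$ for every simple $S$ of finite grade. Under (3) you correctly get $\idim {}_AA<\infty$, and then Iwanaga-Gorenstein from Theorem \ref{FGRandARresults}. The inequality $d_S\ge g_S$ also does follow from Lemma \ref{lem:dual-truncation}. But the only step with real content, proving $d_S\le g_S$ from (2) or from (3), is missing. Steps (b) and (c) are stated only as goals, and the ``pigeonhole'' contains no statement linking $\top P_{d_S}(I(S))$ to $\Ext^{g_S}_A(S,A)$. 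Step (b) is also off: $h(S)$ is $\top DE$, whereas $\Hom_A(X,DE)\cong D\Ext^g_A(S,X)$ for projective $X$ only records the composition multiplicities of $DE$. Moreover, under (3) alone you cannot use $h$ at all. $A$ is not yet known to be Iwanaga-Gorenstein, grades may be infinite, and $\top DE$ need not be simple, so comparing ``the simple detected by $\psi$'' with ``the simple detected by $h$'' cannot even be set up.

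For (2), the missing idea is the endpoint argument of Lemma \ref{lem:endpoint-evaluation}. Let $X$ be injective with $\pdim X=d$, and let $E_R=\Ext^{g_R}_A(R,A)\subseteq B_R$ be as in Lemma \ref{lem:dual-truncation}. Then $\Ext^d_A(X,DB_R)\twoheadrightarrow\Ext^d_A(X,DE_R)$ by Lemma \ref{lem:top-ext-epimorphism}. Also $\Ext^d_A(X,DB_R)$ vanishes for $d>g_R$ (as $\idim DB_R\le g_R$) and for $1\le d<g_R$ (as $X$ is injective). So $\Ext^d_A(X,DE_R)\neq0$ forces $d=g_R$ and $\Hom_A(R,X)\neq0$. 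Now take a summand $P(V)$ of $P_{d_S}(I(S))$ and use \emph{surjectivity} of $h$ (not injectivity, as you envisaged) to write $V=h(R)$. The epimorphism $DE_R\to V$ gives $\Ext^{d_S}_A(I(S),DE_R)\neq0$, hence $R\cong S$ and $d_S=g_S$ (Lemma \ref{lem:corner}). Combined with your reduction, this would give (2)$\Rightarrow$(1) directly, without the multiplicity count the paper needs for (2)$\Rightarrow$(3).

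For (3), the paper uses a mechanism your plan has no counterpart to. First, Lemma \ref{mainlemma}: if $P=\psi(I)$ with $\pdim I=d>0$ and $\idim N<d$, then every $f\colon P\to N$ extends along the radical embedding $P\to P_{d-1}(I)$ because $\Ext^d_A(I,N)=0$, hence $f(P)\subseteq\rad N$. Then a simultaneous induction on $r$ shows $\idim\Omega^r(I)\le r$ and, for each indecomposable summand $P$ of $P_r(I)$, $\pdim\psi^{-1}(P)\le r$ and $\idim P\le r$. Here the projective-cover property of $P_r(I)\to\Omega^r(I)$ is what rules out $\pdim\psi^{-1}(P)>r$. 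This yields $\idim P_r(D(A))\le r$, i.e.\ the Auslander condition for $A^{op}$, which Theorem \ref{FGRandARresults}(1) transfers to $A$. Without an argument of this kind, (3)$\Rightarrow$(1) remains unproved.
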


Note that the direction that (1) implies (2) was already shown by Iyama in \cite{I2} and so we will show that (2) implies (3) and (3) implies (1).

We split the proof in two subsections. 
\subsection{(2) implies (3)}

We first need two lemmas.

\begin{lemma}\label{lem:endpoint-evaluation}
Let $M$ be a module of finite grade $g$, set $E=\Ext_A^g(M,A)$, and let $X$ be an injective module of finite projective dimension $d$. If $\Ext_A^d(X,DE)\neq 0$, then $d=g$ and $\Hom_A(M,X)\neq 0$. Whenever $d=g$, there is a $K$-linear epimorphism
\[
 D\Hom_A(M,X)\longrightarrow\Ext_A^d(X,DE).
\]
\end{lemma}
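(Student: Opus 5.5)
The idea is to compare $DE$ with the larger module $DB$ from Lemma \ref{lem:dual-truncation}, where $\Ext_A^d(X,DB)$ is computable, and then to exploit that $X$ is injective, so that $\Ext_A^{j}(M,X)=0$ for all $j\geq 1$. I would treat the cases $d\geq 1$ and $d=0$ separately, since Lemma \ref{lem:dual-truncation} gives different formulas in the two cases.

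\emph{Case $d\geq 1$.} By Lemma \ref{lem:dual-truncation}, $E$ is a submodule of $B$. Since $K$-duality is exact, the inclusion $E\subseteq B$ dualises to an epimorphism $DB\to DE$ of right modules. Because $\pdim_A X=d$, Lemma \ref{lem:top-ext-epimorphism} shows that the induced map $\Ext_A^d(X,DB)\to\Ext_A^d(X,DE)$ is an epimorphism. Lemma \ref{lem:dual-truncation} then computes the source:
\begin{itemize}
\item for $d>g$, $\Ext_A^d(X,DB)=0$;
\item for $1\leq d\leq g$, $\Ext_A^d(X,DB)\cong D\Ext_A^{g-d}(M,X)$.
\end{itemize}
Since $X$ is injective, the latter vanishes unless $g-d=0$. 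So if $\Ext_A^d(X,DE)\neq 0$, the source must be nonzero, which forces $d=g$. In that case the source is $D\Hom_A(M,X)$, and composing gives the desired epimorphism $D\Hom_A(M,X)\to\Ext_A^d(X,DE)$. Its target is nonzero, hence $\Hom_A(M,X)\neq 0$.

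\emph{Case $d=0$.} Here $X$ is projective, so the last statement of Lemma \ref{lem:dual-truncation} gives $\Hom_A(X,DE)\cong D\Ext_A^g(M,X)$.
\begin{itemize}
\item If $g\geq 1$, injectivity of $X$ kills $\Ext_A^g(M,X)$, so $\Hom_A(X,DE)=0$. Thus nonvanishing forces $g=0=d$.
\item If $g=0$, the isomorphism reads $D\Hom_A(M,X)\cong\Hom_A(X,DE)$. This is in particular an epimorphism, and nonvanishing of the target gives $\Hom_A(M,X)\neq 0$.
\end{itemize}

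The only delicate point is this boundary case $d=0$. The Ext formula of Lemma \ref{lem:dual-truncation} is only stated for $1\leq d\leq g$, so one must use its separate projective statement (with $E$ rather than $B$) instead. Otherwise the argument is a direct combination of the preceding lemmas.
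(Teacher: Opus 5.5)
Your proof is correct and follows the paper's argument essentially step for step. For $d\geq 1$ you use the epimorphism $DB\to DE$ together with Lemma \ref{lem:top-ext-epimorphism} and the Ext formula of Lemma \ref{lem:dual-truncation}, and for $d=0$ you use the projective case of Lemma \ref{lem:dual-truncation} to handle the boundary, exactly as the paper does.
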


\begin{proof}
As in Lemma \ref{lem:dual-truncation}, set $C^i=P_i(M)^{*}$ and \[
 B=\operatorname{coker}(C^{g-1}\longrightarrow C^g),
\]
where $B=C^0$ when $g=0$. The inclusion $E\subseteq B$ dualises to an epimorphism $DB\to DE$.

Assume first that $d>0$. Lemma \ref{lem:top-ext-epimorphism} gives an epimorphism
$\Ext_A^d(X,DB)\to\Ext_A^d(X,DE)$. Hence the hypothesis implies that $\Ext_A^d(X,DB)$ is nonzero. By Lemma \ref{lem:dual-truncation}, this is impossible for $d>g$, while for $d\leq g$ it is equivalent to the non-vanishing of $\Ext_A^{g-d}(M,X)$. Since $X$ is injective, the latter can occur only when $g-d=0$. Thus $d=g$ and $\Hom_A(M,X)\neq 0$. If $d=g>0$, Lemma \ref{lem:dual-truncation} identifies $D\Hom_A(M,X)$ with $\Ext_A^d(X,DB)$, and Lemma \ref{lem:top-ext-epimorphism} shows that the map $\Ext_A^d(X,DB)\to\Ext_A^d(X,DE)$ induced by $DB\to DE$ is surjective. Their composite is the required epimorphism.

If $d=0$, then $X$ is projective-injective. The last assertion of Lemma \ref{lem:dual-truncation} gives
$\Hom_A(X,DE)\cong D\Ext_A^g(M,X)$. Its left-hand side is nonzero by assumption, whereas injectivity of $X$ makes the right-hand side zero for $g>0$. Hence $g=0=d$, and the same isomorphism becomes $\Hom_A(X,DE)\cong D\Hom_A(M,X)$.
\end{proof}

\begin{lemma}\label{lem:corner}
Let $R$ and $S$ be simple modules. Let $I=I(S)$, $d=\pdim_A I$, $g=\gr_A R$, $E=\Ext_A^g(R,A)$ and $U=\top DE$. Assume that $d$ and $g$ are finite, that $U$ is simple, and that $P(U)$ is a direct summand of $P_d(I)$. Then $d=g$ and $R\cong S$.
\end{lemma}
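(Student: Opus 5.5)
We will apply Lemma \ref{lem:endpoint-evaluation} with $M=R$ and $X=I=I(S)$, so that $E=\Ext_A^g(R,A)$. Everything then reduces to showing that $\Ext_A^d(I,DE)\neq 0$. Once this is established, Lemma \ref{lem:endpoint-evaluation} yields $d=g$ and $\Hom_A(R,I(S))\neq 0$. Since $R$ is simple, a nonzero map $R\to I(S)$ is injective, so it lands in $\soc I(S)\cong S$. Hence $R\cong S$, which is exactly the claim.

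To get the nonvanishing, we first pass from $DE$ to the simple module $U$, or rather to its dual. Since $U=\top DE$ and $U$ is simple, there is an epimorphism $DE\to U$. We cannot immediately use Lemma \ref{lem:top-ext-epimorphism} in the wrong direction, but that lemma does give an epimorphism
\[
\Ext_A^d(I,DE)\longrightarrow\Ext_A^d(I,U),
\]
because $\pdim I=d$. So it suffices to show that $\Ext_A^d(I,U)\neq 0$. By Lemma \ref{lem:minimal-ext-simple},
\[
\Ext_A^d(I,U)\cong\Hom_A(P_d(I),U).
\]
This group is nonzero precisely when $P(U)$ is a direct summand of $P_d(I)$, and that is our hypothesis. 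Therefore $\Ext_A^d(I,DE)\neq 0$, and the argument of the first paragraph finishes the proof.

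The step that needs the most care is applying Lemma \ref{lem:top-ext-epimorphism} with the epimorphism $DE\to U$ rather than with an inclusion. I should also check that $DE$ is a right module so that $\Ext_A^d(I,DE)$ makes sense. This holds: $E$ is a left module, being $\Ext^g$ of a right module into $A$, so $DE$ is a right module, $\top DE$ is a right simple module, and $P(U)$ is a right projective, consistent with the hypothesis. The case $d=0$ needs no separate treatment. Lemma \ref{lem:top-ext-epimorphism} with $d=0$ is just right exactness of $\Hom_A(I,-)$ for projective $I$, which holds since then $I$ is projective. Lemma \ref{lem:minimal-ext-simple} also covers $i=0$, and Lemma \ref{lem:endpoint-evaluation} handles $d=0$ internally.
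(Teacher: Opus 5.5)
Your proof is correct and follows essentially the same route as the paper's proof. The paper also gets $\Ext_A^d(I,U)\cong\Hom_A(P_d(I),U)\neq 0$ from Lemma~\ref{lem:minimal-ext-simple}, lifts this to $\Ext_A^d(I,DE)\neq 0$ via the epimorphism $DE\to U$ and Lemma~\ref{lem:top-ext-epimorphism}, and then concludes with Lemma~\ref{lem:endpoint-evaluation} (taking $M=R$, $X=I(S)$) and $\soc I(S)\cong S$; your remarks on module sides and on the case $d=0$ are accurate.
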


\begin{proof}
By Lemma \ref{lem:minimal-ext-simple},
\[
 \Ext_A^d(I,U)\cong\Hom_A(P_d(I),U)\neq 0;
\]
the last inequality follows by restricting to the summand $P(U)$ and using the projective cover $P(U)\to U$. The canonical epimorphism $DE\to U$ induces an epimorphism
$\Ext_A^d(I,DE)\to\Ext_A^d(I,U)$ by Lemma \ref{lem:top-ext-epimorphism}. Hence $\Ext_A^d(I,DE)\neq 0$, and Lemma \ref{lem:endpoint-evaluation}, applied to $M=R$ and $X=I$, yields $d=g$ and $\Hom_A(R,I(S))\neq 0$. A nonzero map from the simple module $R$ to $I(S)$ identifies $R$ with a simple submodule of $I(S)$. Since $\soc I(S)\cong S$, we obtain $R\cong S$.
\end{proof}

We can now prove the main result of this subsection.
\begin{theorem}
Let $A$ be an Iwanaga--Gorenstein algebra with a well-defined, bijective grade map. Then $A$ has a well-defined, bijective Auslander--Reiten map.
\end{theorem}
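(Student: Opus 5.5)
Since $A$ is Iwanaga--Gorenstein, every indecomposable injective $I=I(S)$ has finite projective dimension $d=d_I$. Likewise every simple module $R$ has finite grade $g_R$ by \cite[Lemma 2.2]{KMT}, and by hypothesis $h(R)=\top D\Ext_A^{g_R}(R,A)$ is simple. The plan is to show that $P_d(I)$ is indecomposable and that the resulting map $\psi$ is the bijection $I(S)\mapsto P(h(S))$. Lemma \ref{lem:corner} is the key tool throughout.

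First I show that every indecomposable summand of $P_d(I(S))$ is isomorphic to $P(h(S))$. Let $P(U)$ be an indecomposable summand of $P_d(I)$; it is nonzero since $d=\pdim I$. Because $h$ is a bijection on simples, $U=h(R)$ for a unique simple $R$. Lemma \ref{lem:corner}, applied to $R$ and $S$ with $E=\Ext_A^{g_R}(R,A)$ and $U=\top DE$ simple, gives $R\cong S$ and $d=g_S$. Hence every indecomposable summand of $P_d(I(S))$ is isomorphic to $P(h(S))$, so $P_d(I(S))\cong P(h(S))^{m}$ for some $m\ge 1$.

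The main obstacle is to show $m=1$. For this I would compute $\dim_K\Ext_A^d(I,U)$ with $U=h(S)$ and $d=g_S$. By Lemma \ref{lem:minimal-ext-simple},
\[
\Ext_A^d(I,U)\cong\Hom_A(P(U)^m,U)\cong \End_A(U)^m .
\]
On the other hand, Lemma \ref{lem:endpoint-evaluation} (with $M=S$, $X=I$, $d=g$) gives an epimorphism $D\Hom_A(S,I(S))\to\Ext_A^d(I,DE)$. Lemma \ref{lem:top-ext-epimorphism} gives an epimorphism $\Ext_A^d(I,DE)\to\Ext_A^d(I,U)$. Combining these,
\[
m\dim_K\End_A(U)\le\dim_K\Hom_A(S,I(S))=\dim_K\End_A(S).
\]
To conclude $m=1$ I need $\dim_K\End_A(U)\ge\dim_K\End_A(S)$. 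This is the delicate point when $K$ is not algebraically closed; over a splitting field both sides equal $1$ and we are done. In general, I would try to compare division rings. Here $E=\Ext^g(S,A)$ is naturally a left $\End_A(S)$-module, and I would try to show that $\End(S)$ embeds into $\End(U)$, or at least that dimensions match. Alternatively, I would sum the inequality over all $S$ and use that $h$ is a bijection, which forces equality termwise. I would attempt the summation argument first: since $\sum_S m_S\dim\End(h(S))\le\sum_S\dim\End(S)=\sum_S\dim\End(h(S))$, we get $m_S=1$ for all $S$.

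Given $m=1$, $P_{d_I}(I(S))\cong P(h(S))$ is indecomposable, so the Auslander--Reiten map is well defined and $\psi(I(S))=P(h(S))$. Since $h$ is a bijection and $S\mapsto I(S)$ and $U\mapsto P(U)$ are bijections, $\psi$ is bijective. This also yields the formula $\psi(I(S))=P(h(S))$ claimed in Theorem \ref{thm:mainresult}.
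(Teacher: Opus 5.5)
Your proposal is correct and follows the paper's proof step by step: Lemma~\ref{lem:corner}, applied via surjectivity of $h$, forces $P_d(I(S))\cong P(h(S))^{m_S}$, and the same composite epimorphism $D\Hom_A(S,I(S))\to\Ext_A^d(I(S),DE)\to\Ext_A^d(I(S),h(S))$ gives $m_S\dim_K\End_A(h(S))\le\dim_K\End_A(S)$. The only difference is cosmetic: you cancel the endomorphism dimensions by summing over all simples and reindexing via the bijection $h$ (using $m_S\ge 1$), whereas the paper multiplies the inequalities along each cycle of $h$; both work over an arbitrary field, so your worry about needing $\dim_K\End_A(U)\ge\dim_K\End_A(S)$ or a splitting field is unnecessary.
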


\begin{proof}
Let $S$ be simple, set $I=I(S)$ and $d=\pdim_A I$, which is finite since $A$ is assumed to be Iwanaga-Gorenstein. Let $P(V)$ be an indecomposable direct summand of $P_d(I)$, where $V$ is simple. Surjectivity of the grade map gives a simple module $R$ with $h(R)\cong V$. Lemma \ref{lem:corner} then gives $R\cong S$ and $d=g_S$. Thus every indecomposable direct summand of $P_d(I)$ is isomorphic to $P(h(S))$. By the Krull--Schmidt Theorem,
\[
 P_d(I(S))\cong P(h(S))^{m_S}
\]
for some positive integer $m_S$.

It remains to determine this multiplicity $m_S$. Set $U=h(S)$ and $E=\Ext_A^d(S,A)$. Since $d=g_S$, Lemma \ref{lem:endpoint-evaluation} and the canonical epimorphism $DE\to U$, together with Lemma \ref{lem:top-ext-epimorphism}, give a composite epimorphism
\[
 D\Hom_A(S,I(S))\longrightarrow\Ext_A^d(I(S),DE)
 \longrightarrow\Ext_A^d(I(S),U).
\]
By Lemma \ref{lem:minimal-ext-simple} and the decomposition of $P_d(I(S))$, the last term is isomorphic to $\Hom_A(P(U),U)^{m_S}$. Moreover, $\Hom_A(S,I(S))\cong\End_A(S)$ because $S$ is the socle of $I(S)$, while $\Hom_A(P(U),U)\cong\End_A(U)$ because $U$ is the top of $P(U)$. Comparing dimensions therefore gives
\[
 m_S\cdot\dim_K\End_A(h(S))\leq\dim_K\End_A(S).
\]

Let $S_0,S_1,\ldots,S_{\ell-1}$ be a cycle of the permutation $h$, with $S_{i+1}=h(S_i)$ and indices read modulo $\ell$. Multiplying the preceding inequalities along this cycle cancels the dimensions of the endomorphism rings and gives $\prod_i m_{S_i}\leq 1$. Since each $m_{S_i}$ is a positive integer, all of them are equal to one. Consequently,
\[
 P_{g_S}(I(S))\cong P(h(S))
\]
for every simple module $S$.

It follows that $\psi(I(S))=P(h(S))$. The assignments $S\mapsto I(S)$ and $U\mapsto P(U)$ are bijections on isomorphism classes, and $h$ is a permutation of the simple modules. Hence the Auslander--Reiten map is well defined and bijective.
\end{proof}

\subsection{(3) implies (1)}
\begin{lemma} \label{mainlemma}
Let $A$ be an algebra with a well-defined bijective Auslander-Reiten map $\psi$. 
Let $P$ be an indecomposable projective $A$-module with $\pdim \psi^{-1}(P)=d>0$ and let $N$ be a right $A$-module with $\idim N < d$. Then every $A$-module homomorphism $f: P \rightarrow N$ satisfies $f(P) \subseteq \rad N$.
\end{lemma}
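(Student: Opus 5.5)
The plan is to use the vanishing of $\Ext_A^d(I,N)$, where $I=\psi^{-1}(P)$, together with minimality of the projective resolution of $I$. Set $I=\psi^{-1}(P)$. By definition of the Auslander-Reiten map, $I$ has a finite minimal projective resolution
\[
 0\longrightarrow P_d(I)\xrightarrow{\;\iota\;} P_{d-1}(I)\longrightarrow\cdots\longrightarrow P_0(I)\longrightarrow I\longrightarrow 0
\]
with $P_d(I)\cong P$, and $d=\pdim I>0$. Hence the term $P_{d-1}(I)$ exists, and the last differential $\iota$ is a monomorphism. We identify $P$ with $P_d(I)$ via this isomorphism.

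The key step is that $\Ext_A^d(I,N)=0$, because $\idim N<d$. Compute this Ext group with the resolution above. Since there is no term in degree $d+1$, every homomorphism $P_d(I)\to N$ is a $d$-cocycle. Therefore
\[
 \Ext_A^d(I,N)\cong\operatorname{coker}\bigl(\Hom_A(P_{d-1}(I),N)\xrightarrow{\;-\circ\iota\;}\Hom_A(P,N)\bigr).
\]
The vanishing of this cokernel means that every $f\colon P\to N$ factors as $f=g\circ\iota$ for some $g\colon P_{d-1}(I)\to N$.

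Now use minimality, as in the proof of Lemma \ref{lem:minimal-ext-simple}: $\iota(P)\subseteq\rad P_{d-1}(I)$. Every module homomorphism maps the radical into the radical, so
\[
 f(P)=g(\iota(P))\subseteq g(\rad P_{d-1}(I))\subseteq\rad N.
\]

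There is no real obstacle in this argument. The only points to check are that the hypothesis $d>0$ guarantees that $\iota$ lands in a genuine projective term $P_{d-1}(I)$, which is what makes the minimality condition available. Finiteness of the resolution is what identifies $\Ext^d$ with a cokernel of Hom-spaces. Bijectivity of $\psi$ is used only to make sense of $\psi^{-1}(P)$.
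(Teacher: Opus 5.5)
Your proof is correct and follows the paper's argument: use $\Ext_A^d(I,N)=0$ to factor $f$ through the last differential $P\to P_{d-1}(I)$, then apply minimality and the fact that homomorphisms map radicals into radicals. The only cosmetic difference is how the factorisation is obtained. The paper dimension-shifts to $\Ext_A^1(\Omega^{d-1}(I),N)$ and uses the long exact Hom-sequence of $0\to P\to P_{d-1}(I)\to\Omega^{d-1}(I)\to 0$, whereas you read off $\Ext_A^d(I,N)$ directly as a cokernel from the finite resolution.
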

\begin{proof}
Set $I:=\psi^{-1}(P)$, which is an indecomposable injective $A$-module by our assumption that $A$ has a well-defined bijective Auslander-Reiten map.
Take a minimal projective resolution of $I$ as follows:
$$0 \rightarrow P \xrightarrow{u} P_{d-1} \rightarrow P_{d-2} \rightarrow \cdots \rightarrow P_0 \rightarrow I \rightarrow 0.$$
Note that $\pdim I=d< \infty$ is immediate from the well-definedness of $\psi$.
Since the resolution is minimal, we have $u(P) \subseteq \rad P_{d-1}$. Define $C$ to be the cokernel of the map $u$. Then there is a short exact sequence 
$$0 \rightarrow P \xrightarrow{u} P_{d-1} \rightarrow C \rightarrow 0.$$
Now, by definition we have $C \cong \Omega^{d-1}(I)$ and thus by dimension shifting
$$\Ext_A^1(C,N) \cong \Ext_A^d(I,N)=0,$$
since by assumption $N$ has injective dimension strictly smaller than $d$.
Now apply the functor $\Hom_A(-,N)$ to the short exact sequence 
$$0 \rightarrow P \xrightarrow{u} P_{d-1} \rightarrow C \rightarrow 0.$$
to obtain the exact sequence
$$\Hom_A(P_{d-1},N) \rightarrow \Hom_A(P,N) \rightarrow \Ext_A^1(C,N).$$
As $\Ext_A^1(C,N)=0$, the restriction map $\Hom_A(P_{d-1},N) \rightarrow \Hom_A(P,N)$ is surjective. In other words, for every map $f: P \rightarrow N$ there exists a map $h: P_{d-1} \rightarrow N$ such that $f=h \circ u$.
This gives us that $f(P)=h(u(P)) \subseteq h(\rad P_{d-1}) \subseteq \rad N$, the desired inclusion.
\end{proof}

\begin{theorem}
Let $A$ be an algebra with a well-defined bijective Auslander-Reiten map.
Then for every integer $r \geq 0$:
\begin{enumerate}
    \item For every indecomposable injective $A$-module $I$, we have $\idim \Omega^r(I) \leq r$.
    \item If $P$ is an indecomposable direct summand of $P_r(I)$ in the minimal projective resolution of an indecomposable injective $A$-module $I$, then $\pdim \psi^{-1}(P) \leq r$ and $\idim P \leq r$.
\end{enumerate}
In particular, $A$ is Auslander-Gorenstein.
\end{theorem}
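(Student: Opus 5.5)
We prove (1) and (2) simultaneously by induction on $r$. At stage $r$ we first establish (1) for $r$ and then (2) for $r$. The tools are Lemma \ref{mainlemma}, the dimension inequality $\idim X\leq\max(\idim Y,\idim Z+1)$ for a short exact sequence $0\to X\to Y\to Z\to 0$, and the bijectivity of $\psi$. For $r=0$, statement (1) is trivial because $\Omega^0(I)=I$ is injective.

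\emph{Step (1) at level $r\geq 1$.} Use the short exact sequence
\[
0\to\Omega^{r}(I)\to P_{r-1}(I)\to\Omega^{r-1}(I)\to 0 .
\]
By (1) at level $r-1$ we have $\idim\Omega^{r-1}(I)\leq r-1$. By (2) at level $r-1$, every indecomposable summand of $P_{r-1}(I)$ has injective dimension at most $r-1$. The dimension inequality then gives $\idim\Omega^r(I)\leq r$.

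\emph{Step (2) at level $r$.} Let $P$ be an indecomposable summand of $P_r(I)$ and set $d=\pdim\psi^{-1}(P)$; this is finite and $\psi^{-1}(P)$ is defined because $\psi$ is a well-defined bijection. Consider the restriction $f\colon P\to\Omega^r(I)$ of the projective cover $P_r(I)\to\Omega^r(I)$. Since it is the restriction of a projective cover to a direct summand, $f(P)\not\subseteq\rad\Omega^r(I)$.

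Suppose $d>r$. Then $d>0$ and $\idim\Omega^r(I)\leq r<d$ by Step (1). Lemma \ref{mainlemma} with $N=\Omega^r(I)$ now forces $f(P)\subseteq\rad\Omega^r(I)$, a contradiction. Hence $d\leq r$.

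\emph{Bounding $\idim P$.} Set $J=\psi^{-1}(P)$. By definition of $\psi$, $P$ is the last term of the minimal projective resolution of $J$, so $P\cong\Omega^d(J)$. If $d=0$, then $P\cong J$ is injective. If $d>0$, then every summand of $P_j(J)$ for $j<d\leq r$ has injective dimension at most $j$ by the induction hypothesis (2) at level $j<r$. Applying the dimension inequality successively to
\[
0\to\Omega^{j+1}(J)\to P_j(J)\to\Omega^j(J)\to 0,
\]
starting from the injective module $\Omega^0(J)=J$, gives $\idim\Omega^{j}(J)\leq j$ for all $j\leq d$. Thus $\idim P=\idim\Omega^d(J)\leq d\leq r$. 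This completes the induction.

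The point needing care is the ordering of the induction: (2) at level $r$ uses (1) at level $r$ together with (2) at strictly lower levels. This is legitimate precisely because $d\leq r$ is established before the bound on $\idim P$ is proved, so only $P_j(J)$ with $j<d\leq r$ are needed.

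\emph{Deducing the Auslander-Gorenstein property.} Apply (2) to the minimal projective resolution of $DA=\bigoplus I$. It shows that the minimal projective resolution of $D({}_AA)$ has terms $P_r$ with $\idim P_r\leq r$. Dualising with $D$, these terms become the terms $I^r({}_AA)$ of the minimal injective coresolution of ${}_AA$, and $\pdim I^r({}_AA)=\idim P_r\leq r$. Hence $A^{op}$ satisfies the Auslander condition, and so does $A$ by Theorem \ref{FGRandARresults}(1). Moreover, each indecomposable projective equals $\psi(J)$ for some $J$ by surjectivity of $\psi$, and the argument above shows $\idim\psi(J)\leq\pdim J<\infty$. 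Therefore $\idim A_A<\infty$, and Theorem \ref{FGRandARresults}(2) shows that $A$ is Iwanaga-Gorenstein, hence Auslander-Gorenstein.
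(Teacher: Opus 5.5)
Your proof is correct and follows the paper's argument essentially step for step. You use simultaneous induction on $r$, apply Lemma \ref{mainlemma} with $N=\Omega^r(I)$ against the projective cover $P_r(I)\to\Omega^r(I)$ to force $\pdim\psi^{-1}(P)\leq r$, and use short exact sequences of syzygies to bound injective dimensions. The differences are cosmetic and valid: you handle $r=0$ and the paper's two cases $d<r$, $d=r$ uniformly, and you get $\idim A_A<\infty$ from surjectivity of $\psi$ (via $\idim\psi(J)\leq\pdim J$) rather than from $\pdim D(A)<\infty$.
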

\begin{proof}
We prove (1) and (2) simultaneously by induction on r. \newline
\underline{Base Case $r=0$:} Then $\Omega^0(I)=I$ and $\idim I=0$, so (1) is clear for $r=0$. For (2), let $P$ be an indecomposable direct summand of the projective cover $P_0(I)$ of $I$. 
By the definition of projective covers, the image of the component map $P \rightarrow I$ is not contained in $\rad I$.
Assume $\pdim \psi^{-1}(P)>0$, which implies $\idim I =0 < \pdim \psi^{-1}(P)$ and we can apply Lemma \ref{mainlemma} with $N=I$, which tells us that every map $P \rightarrow I$ has image contained in $\rad I$, a contradiction. Thus $\pdim \psi^{-1}(P)=0$. This immediately gives us that $P \cong \psi^{-1}(P)$ and thus $P$ is projective-injective and thus $\idim P=0$. \newline
\underline{Induction step:}
Assume assertions (1) and (2) have already been proved in every degree strictly smaller than $r$. \newline
\underline{Proof for degree $r$:}
First we prove that (1) is satisfied: Let $I$ be an indecomposable injective $A$-module and consider
the exact sequence 
\begin{equation}\label{syzygies}
0 \longrightarrow \Omega^r(I) \longrightarrow P_{r-1}(I)
\longrightarrow \Omega^{r-1}(I) \longrightarrow 0.
\end{equation}
By induction hypothesis, every indecomposable direct summand of $P_{r-1}(I)$ has injective dimension at most $r-1$ and thus $\idim P_{r-1}(I) \leq r-1$ and we also have $\idim \Omega^{r-1}(I) \leq r-1$.
Let $J = \rad(A)$.
For a general $A$-module $X$, we have $\idim X= \sup \{ t \geq 0 \mid \Ext_A^t(A/J,X) \neq 0 \}$.
Applying the functor $\Hom_A(A/J,-)$ to the short exact sequence \ref{syzygies} and looking at the long exact Ext-sequence we get for $t \geq r$
$$\cdots \rightarrow \Ext_A^{t}(A/J,\Omega^{r}(I)) \rightarrow \Ext_A^{t}(A/J,P_{r-1}(I)) \rightarrow \Ext_A^{t}(A/J,\Omega^{r-1}(I)) \rightarrow  $$$$\Ext_A^{t+1}(A/J,\Omega^{r}(I)) \rightarrow 
\Ext_A^{t+1}(A/J,P_{r-1}(I)) \rightarrow \cdots .$$
Using $\Ext_A^{t}(A/J,P_{r-1}(I))=0$ and $\Ext_A^{t}(A/J,\Omega^{r-1}(I))=0$ for $t \geq r$, this shows that for $t \geq r$  we have $\Ext_A^{t+1}(A/J,\Omega^{r}(I))=0$, and thus, $\idim \Omega^r(I) \leq r$ and (1) is proven in degree $r$.
Now let $P$ be an indecomposable direct summand of $P_r(I)$ and consider the projective cover $P_r(I) \rightarrow \Omega^r(I)$ of $\Omega^r(I)$. 
We show that $\pdim \psi^{-1}(P) \leq r$. Assume otherwise that $\pdim \psi^{-1}(P) >r$, which gives by the already proven case (1) that $\idim \Omega^r(I) \leq r < \pdim \psi^{-1}(P)$. Now we apply Lemma \ref{mainlemma} with $N=\Omega^r(I)$ to conclude that $P \rightarrow \Omega^r(I)$ has image contained in $\rad \Omega^r(I)$, which contradicts the properties of the projective cover. Thus we must have $\pdim \psi^{-1}(P) \leq r$.
In the final step we must prove that $\idim P \leq r$. We consider here two cases. \newline
\textbf{Case 1:} First assume that $\pdim \psi^{-1}(P)<r$. Let $R:= \psi^{-1}(P)$, which is indecomposable injective by assumption that $A$ has a well-defined bijective Auslander-Reiten map. Set $d:=\pdim \psi^{-1}(P)$.
Then $P$ occurs as the final non-zero indecomposable term in degree $d$ of the minimal projective resolution of $R$ and thus $P$ is an indecomposable direct summand of $P_d(R)$ (in fact, it is isomorphic to that). Since we are in the case $\pdim \psi^{-1}(P) <r$, the integer $d$ is strictly smaller than the current induction degree $r$ and, by induction hypothesis, the assertion (2) applies. \newline
\textbf{Case 2:} Assume now that $\pdim \psi^{-1}(P)=r$.
Set $R:=\psi^{-1}(P)$ and consider 
$$0 \rightarrow P \rightarrow P_{r-1}(R) \rightarrow \Omega^{r-1}(R) \rightarrow 0.$$
By induction hypothesis and (2), every indecomposable summand of $P_{r-1}(R)$ has injective dimension at most $r-1$ and thus $\idim P_{r-1}(R) \leq r-1$ and furthermore by induction hypothesis and (1) we know that $\idim \Omega^{r-1}(R) \leq r-1$. As before, a long Ext-sequence argument shows then that $\idim P \leq r$ and thus (2) is also proven by induction.

Now let $A$ be an algebra with a well-defined, bijective Auslander-Reiten map and let 
$$0 \rightarrow P_n(D(A)) \rightarrow \cdots \rightarrow P_0(D(A)) \rightarrow D(A) \rightarrow 0$$
be a minimal projective resolution of $D(A)$, which is obtained as a direct sum of minimal projective resolutions of the indecomposable injective $A$-modules.
Since $\psi$ is well-defined and bijective, this resolution of $D(A)$ is finite.
By (2), $\idim P \leq r$ for every indecomposable direct summand $P$ of $P_r(D(A))$ and thus $\idim P_r(D(A)) \leq r$ for all $r \geq 0$. Thus, $A^{op}$ satisfies the Auslander condition.
By Theorem \ref{FGRandARresults},  with $A^{op}$, also $A$ satisfies the Auslander condition and is in fact Auslander-Gorenstein since every indecomposable injective $A$-module has finite projective dimension, which is equivalent to every indecomposable projective $A^{op}$-module having finite injective dimension by duality.
\end{proof}

\section{Three applications}

\subsection{A Coxeter matrix criterion for being Auslander regular}
In this section let $A=KQ/I$ be finite dimensional quiver algebra with admissible ideal $I$ with $n$ simple modules and primitive idempotents $e_i$ corresponding to the vertices of $Q$. Note that over algebraically closed fields $K$, every finite dimensional $K$-algebra is Morita equivalent to such a quiver algebra and thus our assumption that $A$ is a quiver algebra is no loss of generality in this case. Recall that an algebra $A$ is called \emph{Auslander regular} if $A$ is Auslander-Gorenstein of finite global dimension. The \emph{Cartan matrix} $W$ of $A$ is defined as the $n \times n$-matrix with entries $W_{i,j}=\dim e_j A e_i$ and the \emph{Coxeter matrix} of $A$ is defined as $C:=-W^T W^{-1}$. 
Now assume in this subsection that $Q$ is an acyclic quiver.
A labelling by $\{1,...,n\}$ of the vertices of $A$ is called a \emph{natural labelling} if $e_i A e_j \neq 0$ implies that $i \leq j$. Note that this is equivalent to the Cartan matrix of $A$ being lower triangular.
The \emph{Coxeter permutation} of a naturally labelled acyclic quiver algebra $A$ is defined as the permutation corresponding to the permutation matrix $P$ such that $C=U_1 P U_2$ is a Bruhat factorisation of the Coxeter matrix $C$ of $A$ with upper triangular matrices $U_1$ and $U_2$. Note that this depends on the chosen natural labelling in general, but it was shown in \cite[Theorem 4.6]{KKM} that for Auslander regular algebras $A$ the Coxeter permutation is independent of the chosen natural labelling and coincides with the Auslander-Reiten permutation, we refer to \cite[Section 4]{KKM} for details and examples. Note here that we identity the Auslander-Reiten bijection on an Auslander regular algebra with a permutation, called the Auslander-Reiten permutation, on the simple modules or equivalently on the vertices of the quiver algebra, which we can do by the formula 
\begin{equation*}
 \psi(I(S))=P(h(S)).
\end{equation*}
from theorem \ref{thm:mainresult}.
In \cite{KKM} it was proved that a naturally labelled acyclic Auslander regular algebra $A$ has the property that the Coxeter matrix can be written as $C=PU$ for a permutation matrix $P$ and an upper triangular matrix $U$. In other words: Up to reordering the rows of $C$, $C$ is upper triangular.
In \cite{KMT} it was shown that the condition that there exists a factorisation $C=PU$ is even equivalent to being Auslander regular for incidence algebras of finite lattices and in \cite{KKM} this was proved also for certain monomial algebras.
In general, it is an open problem under which precise conditions
Auslander regularity is equivalent to the existence of a factorisation
$C=PU$ for a naturally labelled acyclic algebra.
We give the following definition, following \cite[Definition 4.3]{KKM}:
\begin{definition}
Let $A=KQ/I$ be a naturally labelled algebra.
An $A$-module $M$ \emph{satisfies property $\circledast$} if the following property is satisfied:
Let $0 \rightarrow M \rightarrow I^0 \rightarrow \cdots \rightarrow I^n \rightarrow 0 $ be a minimal injective coresolution of $M$, then $I^n=I(x)$ is an indecomposable module and $x <y$ for all $y$ such that $I(y)$ is a direct summand of $I^i$ for some $0 \leq i <n$.
We say that an algebra $A$ \emph{satisfies property $\circledast$} if every indecomposable projective $A$-module satisfies property $\circledast$.
\end{definition}
Examples of naturally labelled acyclic algebras satisfying property $\circledast$ include Auslander regular algebras, 2-Gorenstein monomial algebras and linear Nakayama algebras, we refer to \cite[Section 4]{KKM}.
Combining our main result in theorem \ref{thm:mainresult} with the results of \cite[Section 4]{KKM}, we obtain the following result:
\begin{theorem} \label{C=PUtheorem}
\label{thm::C=PU_iff_ARbij}
    Let $A$ be a naturally labelled algebra that satisfies property $\circledast$. Then the following are equivalent:
    \begin{enumerate}
\item $A$ is Auslander regular.
        \item There exists a Bruhat decomposition of the Coxeter matrix  $C_A=U_1PU_2$ where $U_1=Id.$
    \end{enumerate}
In this case, the Coxeter permutation coincides with the Auslander-Reiten permutation.
\end{theorem}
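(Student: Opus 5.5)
The plan is to reduce both directions to Theorem~\ref{thm:mainresult} together with the results of \cite[Section 4]{KKM}. Since $A$ is an acyclic quiver algebra, it has finite global dimension. So Auslander regular means the same as Auslander-Gorenstein here, and by Theorem~\ref{thm:mainresult} this is equivalent to $A$ having a well-defined Auslander-Reiten bijection $\psi$. The statement therefore reduces to proving, for naturally labelled algebras with property $\circledast$, that a well-defined Auslander-Reiten bijection exists if and only if the Coxeter matrix factors as $C_A=PU_2$ with $P$ a permutation matrix and $U_2$ upper triangular. In that case $P$ should be the matrix of $\psi$, read as a permutation of the vertices via $\psi(I(S))=P(h(S))$.

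For (1) $\Rightarrow$ (2), I would cite \cite{KKM}: a naturally labelled acyclic Auslander regular algebra satisfies $C=PU$, and by \cite[Theorem 4.6]{KKM} the permutation is the Auslander-Reiten permutation. Since $U_1=\mathrm{Id}$ is upper triangular, this is a Bruhat decomposition of the required form.

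For (2) $\Rightarrow$ (1), I would use property $\circledast$ as in \cite[Section 4]{KKM}. Write $C=-W^TW^{-1}$. The rows of $C$ encode the dimension vectors of the alternating sums of the terms in the minimal injective coresolutions of the indecomposable projectives; equivalently, after transposing and using $D$, those of the minimal projective resolutions of the indecomposable injectives. Property $\circledast$ says that the last term of the coresolution of $e_xA$ is a single $I(x')$, with $x'$ strictly smaller than every vertex occurring in earlier terms. So in the corresponding row of $C$, the leading (smallest-index) nonzero entry sits at $x'$, with coefficient $\pm1$, and it comes only from the last term. If $C=PU_2$, the leading positions of the rows of $C$ are pairwise distinct, so $x\mapsto x'$ is injective and hence a bijection.

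This yields, for the projectives, an end term map $e_xA\mapsto I(x')$ that is indecomposable and bijective. The key step, and the main obstacle, is to convert it into the statement about injectives needed in Theorem~\ref{thm:mainresult}(3): each indecomposable injective must have minimal projective resolution ending in an indecomposable projective, with the resulting map bijective. I would first try to use \cite[Section 4]{KKM}, which I expect contains exactly this translation under $\circledast$. If it does not, I would apply $D$ to pass to $A^{op}$ and then invoke Theorem~\ref{thm:mainresult} for $A^{op}$. Auslander-Gorensteinness is left-right symmetric by Theorem~\ref{FGRandARresults}, and global dimension is finite, so it would suffice to establish the well-defined Auslander-Reiten bijection for one of $A$ or $A^{op}$. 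Finally, the permutation $P$ from the factorisation would be identified with $\psi$ by matching leading positions, which gives the last claim of the statement.
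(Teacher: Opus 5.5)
Your proposal is correct and follows the paper's route. The paper cites \cite[Theorem 4.5]{KKM} for the equivalence of (2) with $A^{op}$ having a well-defined Auslander--Reiten bijection, then applies Theorem~\ref{thm:mainresult} to $A^{op}$ and uses the left-right symmetry from Theorem~\ref{FGRandARresults}; your leading-position argument under $\circledast$ is a direct proof of the direction of that cited equivalence which you need. The step you call the main obstacle is in fact immediate, since applying $D$ turns the minimal injective coresolutions of the $e_xA$ into minimal projective resolutions of the indecomposable injective $A^{op}$-modules, which is exactly the paper's passage to $A^{op}$ (and for the final claim, rely on \cite[Theorem 4.6]{KKM} as you do, because matching leading positions by itself only identifies $P$ with the Auslander--Reiten map of $A^{op}$, not with $\psi$).
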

\begin{proof}
It was proven in \cite[Theorem 4.5]{KKM} that condition (2) is equivalent to $A^{op}$ having a well-defined Auslander-Reiten bijection, which by our main result theorem \ref{thm:mainresult} is equivalent to $A^{op}$ being Auslander regular, which is equivalent to $A$ being Auslander regular by theorem \ref{FGRandARresults}.
\end{proof}
For example, it is elementary to see that linear Nakayama algebras (which have unique natural labelling, which we assume in the following) satisfy property $\circledast$ and thus a linear Nakayama algebra $A$ is Auslander regular if and only if we have a factorisation $C=PU$ for the Coxeter matrix of $A$ as was already proven in \cite{KKM}. This translates the homological Auslander condition to simple linear algebra in this case. 
It would be interesting to know whether one can relax property $\circledast$ in theorem \ref{C=PUtheorem} or replace it with other conditions that include other interesting classes of algebras.

\subsection{The Auslander-Gorenstein property for incidence algebras of finite lattices}
In this section we give a new proof of the fact that a finite lattice $L$ with Auslander-Gorenstein incidence algebra $A$ must be distributive. 
This was first proven in \cite[Theorem 5.4]{IM} by a long case distinction. Here we will give a much faster proof using the two facts that a finite lattice is distributive if and only if its join-irreducible elements coincide with its join-prime elements and the fact that being Auslander-Gorenstein is the same as having a well-defined Auslander-Reiten bijection by the main result of this article.
We use the conventions and notions of \cite{IM} for lattices and their incidence algebras. 
Recall that the simple modules of $A$ correspond to vertices of the lattice $L.$ 
For an $x\in L$, $S(x), P(x)$ and $I(x)$ denote the simple, indecomposable projective, and indecomposable injective $A$-module associated to $x,$ respectively. 
For a finite lattice $L$, the minimal element is denoted by $m$ and the maximal element by $M$. Additionally, we define for an element $x$ in a lattice $L$ the subset $\downset{x}  := \{y \in L \mid y \leq x \}$ and $\upset{x}  := \{y \in L \mid y \geq x \}.$
We will use the elementary fact that an element $j \in L$ is join-irreducible if and only if it covers a unique element in $L$. We will denote this unique element by $j_{*}$ in the following.
\begin{lemma} \label{mainlemmaARbijimpliesdist}
Let $j$ be a join-irreducible element of the finite lattice $L$ and assume that the last nonzero term in the minimal projective resolution of the $A$-module $I(x)$ is $P(j)$. Then $\pdim I(x)=1$ and $L \setminus \downset{x}=\upset{j}$.
\end{lemma}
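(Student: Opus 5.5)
The idea is to use join-irreducibility of $j$ to bound the injective dimension of $S(j)$ by $1$. The non-vanishing of $\Ext^d(I(x),S(j))$ coming from the last term $P(j)$ then forces $d\le 1$. The remaining cases $d=0$ and $d=1$ are read off from supports.

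\textbf{Conventions.} I use the conventions of \cite{IM} in the following form. Every indecomposable projective or injective module of the incidence algebra is a thin module, with one-dimensional vector space at each vertex of its support and identity maps along relations. I take the support of $P(y)$ to be $\upset{y}$ and the support of $I(y)$ to be $\downset{y}$. With the opposite convention, the same argument goes through after interchanging the roles of $\uparrow$ and $\downarrow$ throughout.

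\textbf{Step 1: $\idim S(j)\le 1$.} Since $j$ is join-irreducible, it covers the unique element $j_*$, so $\downset{j}\setminus\{j\}=\downset{j_*}$. The socle of $I(j)$ is $S(j)$, and $I(j)/S(j)$ is the thin module supported on $\downset{j}\setminus\{j\}=\downset{j_*}$. That module has simple socle $S(j_*)$ and is therefore isomorphic to $I(j_*)$. This gives an exact sequence
\[
0\longrightarrow S(j)\longrightarrow I(j)\longrightarrow I(j_*)\longrightarrow 0,
\]
so $\idim S(j)\le 1$.

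\textbf{Step 2: $d\le 1$.} Let $d=\pdim I(x)$. By hypothesis $P_d(I(x))\cong P(j)$, so Lemma \ref{lem:minimal-ext-simple} gives
\[
\Ext_A^d(I(x),S(j))\cong\Hom_A(P(j),S(j))\neq 0.
\]
Since $\idim S(j)\le 1$, this forces $d\le 1$.

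\textbf{Step 3: excluding $d=0$.} If $d=0$, then $I(x)\cong P(j)$, so $\downset{x}=\upset{j}$. This set contains $m$, hence $j\le m$ and so $j=m$. But $m$ covers no element, so it is not join-irreducible, a contradiction. Hence $d=1$.

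\textbf{Step 4: $d=1$ gives the support equality.} The minimal projective resolution now reads $0\to P(j)\to P_0\to I(x)\to 0$. The top of $I(x)$ is $S(m)$, because $m$ is the unique minimal element of $\downset{x}$, so $P_0=P(m)$. The map $P(m)\to I(x)$ is the canonical surjection of thin modules, identity on $\downset{x}$. Its kernel is therefore the thin module supported on $L\setminus\downset{x}$. This kernel is isomorphic to $P(j)$, which is supported on $\upset{j}$. Comparing supports gives $L\setminus\downset{x}=\upset{j}$.

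\textbf{Main obstacle.} The only delicate points are convention bookkeeping. One must check that $I(j)/S(j)\cong I(j_*)$, which is exactly where the unique lower cover is used. One must also check that the kernel of $P(m)\to I(x)$ really is the thin module on the complement of $\downset{x}$. Both follow from the thin-module description of the indecomposable projectives and injectives.
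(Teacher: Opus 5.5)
Your proof is correct and follows the paper's argument step for step: the coresolution $0\to S(j)\to I(j)\to I(j_*)\to 0$ bounds $\idim S(j)$, the nonvanishing of $\Ext_A^d(I(x),S(j))\cong\Hom_A(P(j),S(j))$ forces $d\le 1$, the case $d=0$ is excluded, and comparing dimension vectors in $0\to P(j)\to P(m)\to I(x)\to 0$ gives $L\setminus\downset{x}=\upset{j}$. Your Step 3 simply makes explicit the paper's terse remark that $P(j)$ is not projective-injective; your support conventions agree with the ones the paper's proof uses.
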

\begin{proof}
Since $j$ is join-irreducible, we have $\downset j \setminus \{j \}= \downset j_*$ and this gives the minimal injective coresolution 
$$0 \rightarrow S(j) \rightarrow I(j) \rightarrow I(j_*) \rightarrow 0.$$
Thus $\idim S(j)=1$.
Now let $I(x)$ be an indecomposable injective $A$-module with nonzero last term $P(j)$ in its minimal projective resolution and $\pdim I(x)=d$.
Then using Lemma \ref{lem:minimal-ext-simple} $\Ext_A^d(I(x),S(j))=\Hom_A(P(j),S(j)) \neq 0$, which implies $d \leq 1$ as $\idim S(j)=1$.
As the last term in the minimal projective resolution of $I(x)$ is $P(j)$, which is not projective-injective, we must have $d=1$.
We have the short exact sequence 
$$0 \rightarrow P(j) \rightarrow P(m) \rightarrow I(x) \rightarrow 0,$$
which directly gives by comparing dimension vectors of those modules that $L \setminus \downset{x}=\upset{j}$.
\end{proof}
Recall that an element $j$ in a finite lattice $L$ is called \emph{join-prime} if it is not the minimal element and 
\[
 j\leq a\vee b\quad\Longrightarrow\quad
 j\leq a\ \text{or}\ j\leq b.
\] Note that a join-prime element is always join-irreducible, see for example \cite[Chapter 9.3]{Gar}.
The following lemma gives an important characterisation of distributive lattices among finite lattices:
\begin{lemma} \label{distributive lattice characterisationlemma}
Let $L$ be a finite lattice.
Then the following two conditions are equivalent:
\begin{enumerate}
    \item $L$ is distributive.
    \item For every $x \in L$, $x$ is join-irreducible if and only if $x$ is join-prime.
\end{enumerate}
\end{lemma}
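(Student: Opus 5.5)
The statement is a standard fact of lattice theory, and we plan a direct proof via Birkhoff's representation. Throughout we use that a join-prime element is always join-irreducible, as recalled above. Condition (2) is therefore equivalent to: every join-irreducible element of $L$ is join-prime.

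For (1) $\Rightarrow$ (2), let $L$ be distributive and $j$ join-irreducible with $j\le a\vee b$. Then
\[
 j=j\wedge(a\vee b)=(j\wedge a)\vee(j\wedge b).
\]
By join-irreducibility, $j=j\wedge a$ or $j=j\wedge b$, that is, $j\le a$ or $j\le b$. Also $j\neq m$, since join-irreducible elements are not the minimum. Hence $j$ is join-prime.

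For (2) $\Rightarrow$ (1), let $J$ denote the poset of join-irreducible elements, and define
\[
 \varphi\colon L\to\{\text{down-sets of }J\},\qquad \varphi(x)=\downset{x}\cap J.
\]
The target, the lattice of down-sets of $J$ under union and intersection, is distributive. So it suffices to show that $\varphi$ is an injective lattice homomorphism, since a sublattice of a distributive lattice is distributive.
\begin{itemize}
 \item Meets: $\varphi(x\wedge y)=\varphi(x)\cap\varphi(y)$ holds trivially.
 \item Injectivity: in a finite lattice every element is the join of the join-irreducibles below it. This follows by induction, writing a non-join-irreducible $x\neq m$ as a join of two strictly smaller elements, with $m$ the empty join. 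Hence $x=\bigvee\varphi(x)$, and $\varphi$ is injective.
 \item Joins: clearly $\varphi(x)\cup\varphi(y)\subseteq\varphi(x\vee y)$. Conversely, if $j\in J$ and $j\le x\vee y$, then $j$ is join-prime by (2), so $j\le x$ or $j\le y$. Thus $\varphi(x\vee y)=\varphi(x)\cup\varphi(y)$.
\end{itemize}
This proves (2) $\Rightarrow$ (1).

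The only mildly delicate point is the join-preservation step, which is exactly where hypothesis (2) enters. Everything else is routine. Alternatively, one could simply cite the statement from \cite[Chapter 9.3]{Gar} or a standard lattice theory text.
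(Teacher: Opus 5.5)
Your proof is correct, but it takes a different route from the paper, which gives no argument for this lemma and simply cites \cite[Theorem 5.1]{CLM}.

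Your version is self-contained. For (1) $\Rightarrow$ (2) you apply the distributive law to $j\wedge(a\vee b)$ and use join-irreducibility. For (2) $\Rightarrow$ (1) you embed $L$ into the distributive lattice of down-sets of the join-irreducibles via $x\mapsto\downset{x}\cap J$. In effect this reproves the injective half of Birkhoff's representation theorem, which is all you need, since distributivity passes to sublattices and is transported by isomorphism.

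What your route buys is transparency. It shows that hypothesis (2) enters only in the join-preservation step. It also makes clear that the paper's later application uses only the direction (2) $\Rightarrow$ (1). There one shows every join-irreducible $j$ is join-prime and concludes distributivity, together with the trivial fact that join-prime elements are join-irreducible. The citation buys only brevity.
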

\begin{proof}
See for example \cite[Theorem 5.1]{CLM}.
\end{proof}
We can now prove the main result of this section:
\begin{theorem}
Let $L$ be a finite lattice whose incidence algebra $A$ is Auslander-Gorenstein. Then $L$ is distributive.
\end{theorem}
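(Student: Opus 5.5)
By Lemma \ref{distributive lattice characterisationlemma}, it suffices to show that every join-irreducible element $j$ of $L$ is join-prime. Since $A$ is Auslander-Gorenstein, Theorem \ref{thm:mainresult} says that $A$ has a well-defined Auslander-Reiten bijection $\psi$. The plan is to use surjectivity of $\psi$ to realise $P(j)$ as the last term of the minimal projective resolution of some indecomposable injective, and then to read off join-primeness from Lemma \ref{mainlemmaARbijimpliesdist}.

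Fix a join-irreducible element $j$. Because $\psi$ is surjective, there is some $x \in L$ with $\psi(I(x)) = P(j)$. This means that $P(j)$ is the last nonzero term in the minimal projective resolution of $I(x)$. Lemma \ref{mainlemmaARbijimpliesdist} then gives
\[
 L \setminus \downset{x} = \upset{j}.
\]
In particular $j \neq m$: otherwise $\upset{j} = L$, which would force $\downset{x} = \emptyset$, contradicting $x \in \downset{x}$.

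For the join-prime property, suppose $j \leq a \vee b$ and, for contradiction, that $j \not\leq a$ and $j \not\leq b$. Then $a, b \notin \upset{j}$, so $a, b \in \downset{x}$, that is, $a \leq x$ and $b \leq x$. Hence $a \vee b \leq x$, so $a \vee b \in \downset{x}$, and therefore $a \vee b \notin \upset{j}$. This contradicts $j \leq a \vee b$. So $j$ is join-prime.

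Join-prime elements are always join-irreducible, so the join-irreducible and join-prime elements coincide, and Lemma \ref{distributive lattice characterisationlemma} shows that $L$ is distributive. The only substantive input is the surjectivity of $\psi$ supplied by the main theorem; once that is available, the step above is an elementary lattice-theoretic check and presents no real obstacle.
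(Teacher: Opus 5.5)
Your proposal is correct and matches the paper's proof: surjectivity of the Auslander--Reiten bijection from Theorem~\ref{thm:mainresult} gives some $I(x)$ with $\psi(I(x))=P(j)$, Lemma~\ref{mainlemmaARbijimpliesdist} gives $L\setminus\downset{x}=\upset{j}$, and the same short lattice argument shows that $j$ is join-prime. Your extra check that $j\neq m$ is harmless but redundant, since a join-irreducible element covers a unique element and so cannot be the minimum.
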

\begin{proof}
We use that by our main result, Theorem \ref{thm:mainresult}, being Auslander-Gorenstein is equivalent to having a well-defined Auslander-Reiten bijection.
Let $j$ be join-irreducible. We show that $j$ is join-prime and then $L$ is distributive by Lemma \ref{distributive lattice characterisationlemma}. Since by assumption, the Auslander-Reiten map is surjective there is an indecomposable injective module $I(x)$ with last nonzero term $P(j)$ in its minimal projective resolution. By Lemma \ref{mainlemmaARbijimpliesdist} we have $L \setminus \downset{x}=\upset{j}$. 
Suppose \(j\leq a\vee b\), but \(j\nleq a\) and \(j\nleq b\).  Then
\(a,b\notin\upset{j}\). Now $L \setminus \downset{x}=\upset{j}$ implies
\[
 a,b\in\downset{x},
\]
so \(a\leq x\) and \(b\leq x\).  Hence \(a\vee b\leq x\), and thus
\[
 j\leq a\vee b\leq x.
\]
However, \(j\in\upset{j}=L\setminus\downset{x}\), so \(j\nleq x\).
This contradiction proves that \(j\) is join-prime.
\end{proof}
In forthcoming work \cite{IKKM} we will give a general classification of finite posets with 2-Gorenstein incidence algebras, which can be used to generalise the theorem in this subsection from lattices to general posets.

\subsection{On diagonal Auslander regular algebras}
In \cite[Section 3.4]{I3}, Iyama introduced the notion of diagonal Auslander regular algebras as follows:
\begin{definition}
Let $A$ be a finite dimensional algebra
with minimal injective coresolution
$$0 \rightarrow A_A \rightarrow I^0 \rightarrow I^1 \rightarrow \cdots .$$
Then $A$ is called \emph{diagonal Auslander regular} if $\gldim A < \infty$ and every indecomposable direct summand $X$ of $I^i$ has projective dimension $i$ for all $i \geq 0$.
\end{definition}
We remark that it was shown in \cite[Corollary 3.4]{KMT} that an Auslander regular algebra $A$ is diagonal Auslander regular if and only if every simple module $S$ is \emph{perfect} in the sense that it satisfies $\gr S= \pdim S.$
The class of diagonal Auslander regular algebras is rather mysterious: While Iyama classified them in the global dimension $\leq 2$ case in \cite{I3} and it was shown in \cite{IM} that incidence algebras of distributive lattices are diagonal Auslander regular, no other large class of diagonal Auslander regular algebras is currently known.
In \cite[Section 3.6.2]{I3}, Iyama posed the following question:
\begin{question}
If $A$ is a diagonal Auslander regular algebra, is the same true for $A^{op}$?
\end{question}
Since being Auslander regular is left right symmetric, this question can be equivalently stated as follows: If $A$ is an Auslander regular algebra with all simple $A$-modules being perfect, are then all simple $A$-modules coperfect? Here \emph{coperfect} for an $A$-module $M$ means that $\operatorname{cograde} M := \inf \{ i \geq 0 \mid \Ext_A^i(D(A),M) \neq 0 \}$ coincides with the injective dimension $\idim M$ of $M$.

The goal of this section is to give a negative answer to the question by Iyama. This was obtained using the GAP-package \cite{QPA}, \cite{Sage} and ChatGPT pro. We will briefly explain how we obtained this counterexample in the following. 
The starting point is the following proposition:
\begin{proposition} \label{diagausregpropo}
Let $A$ be a naturally labelled diagonal Auslander regular algebra with Cartan matrix $W$ and Coxeter matrix $C=-W^T W^{-1}$.
Then the following conditions hold:
\begin{enumerate}
    \item $C=PU$ for a permutation matrix $P$ and an upper triangular matrix $U$.
    \item The non-zero entries in a column of $C$ all have the same sign.
\end{enumerate}
\end{proposition}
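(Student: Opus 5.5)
For (1), the plan is to reduce to the known result of \cite{KKM} quoted earlier in this section: a naturally labelled acyclic Auslander regular algebra has a factorisation $C=PU$. So it is enough to check that a diagonal Auslander regular algebra is Auslander regular. By definition $\gldim A<\infty$, so $A$ is Iwanaga-Gorenstein. In the minimal injective coresolution $0\to A_A\to I^0\to I^1\to\cdots$, every indecomposable summand of $I^i$ has projective dimension exactly $i$. Hence $\pdim I^i\leq i$ for all $i$, which is the Auslander condition. Finally, $A$ is naturally labelled, so its quiver is acyclic (the Cartan matrix is lower triangular), and the cited result gives (1).

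For (2), the plan is to read the columns of $C$ as dimension vectors. First I fix the conventions from $W_{i,j}=\dim e_jAe_i$: one of $W$, $W^T$ has the dimension vectors of the indecomposable projectives $P(i)$ as its columns, and the other has those of the indecomposable injectives $I(i)=\nu P(i)$, where $\nu=D\Hom_A(-,A)$ is the Nakayama functor. Write $\underline{\dim}$ for dimension vectors in the basis of simples.

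Since $\gldim A<\infty$, each simple $S_i$ has a finite minimal projective resolution $P_\bullet(S_i)$. In the Grothendieck group this gives $\underline{\dim}\,S_i=\sum_k(-1)^k\underline{\dim}\,P_k(S_i)$. Consequently $W^{-1}$ applied to the $i$-th unit vector records the alternating multiplicities of the indecomposable projectives in $P_\bullet(S_i)$. Applying $-W^T$ then replaces each $P(j)$ by $\nu P(j)=I(j)$. So, up to transposition conventions, the $i$-th column of $C$ is
\[
-\sum_k(-1)^k\,\underline{\dim}\,\nu P_k(S_i).
\]

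Now I use the remark after the definition, based on \cite[Corollary 3.4]{KMT}: for an Auslander regular algebra, diagonal Auslander regularity means every simple $S$ is perfect. For a perfect simple $S$ with $g=\gr S=\pdim S$, the complex $\Hom_A(P_\bullet(S),A)$ has length $g$. Its cohomology vanishes below degree $g$ by the definition of grade and above degree $g$ by length, so its only cohomology is $\Ext_A^g(S,A)$ in degree $g$. Applying the exact functor $D$, the complex $\nu P_\bullet(S)$ has homology only in one degree, namely $D\Ext_A^g(S,A)$. Taking the Euler characteristic gives
\[
\sum_k(-1)^k\,\underline{\dim}\,\nu P_k(S)=(-1)^g\,\underline{\dim}\,D\Ext_A^g(S,A).
\]
Hence the column of $C$ belonging to $S$ equals $(-1)^{g+1}$ times a dimension vector of a module. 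A dimension vector has nonnegative entries, so all nonzero entries of that column share the sign $(-1)^{g+1}$.

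The main obstacle is bookkeeping rather than substance: I must verify, with the stated convention $W_{i,j}=\dim e_jAe_i$ and right modules, that the columns of $-W^TW^{-1}$ correspond to the simples' resolutions (giving column statements) and not their transposes. If the conventions instead produce rows, I would run the same argument for $A^{op}$ with coperfectness. That version would not be available, since coperfectness is exactly what Iyama's question asks about. So this check must be done carefully, for instance on a small example such as the path algebra of $A_2$.
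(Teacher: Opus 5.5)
Your proof is correct. Part (1) is the paper's argument: diagonality gives $\pdim I^i\leq i$, and with $\gldim A<\infty$ this makes $A$ Auslander regular, so \cite[Theorem 4.6]{KKM} applies.

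For (2) the routes differ. The paper only cites \cite[Proposition 10.1]{Kle}. You give a self-contained derivation: since $\gldim A<\infty$, the matrix $C$ sends $\underline{\dim}\,S_i$ to $-\sum_k(-1)^k\underline{\dim}\,\nu P_k(S_i)$. Perfectness of $S_i$ then collapses this Euler characteristic to $(-1)^{g_i+1}\underline{\dim}\,D\Ext_A^{g_i}(S_i,A)$.

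The bookkeeping you left open comes out in your favour. From $W_{i,j}=\dim e_jAe_i$ one gets $We_j=\underline{\dim}\,e_jA=\underline{\dim}\,P(j)$ and $W^Te_j=\underline{\dim}\,D(Ae_j)=\underline{\dim}\,I(j)$. So $Ce_i$ really is the vector attached to $P_\bullet(S_i)$, and no passage to $A^{\op}$ is needed. For the path algebra of $1\to 2$, $C$ has columns $(0,1)^T$ and $(-1,-1)^T$ but second row $(1,-1)$. So the statement is genuinely about columns.

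You can also avoid \cite[Corollary 3.4]{KMT}. By the dual of Lemma~\ref{lem:minimal-ext-simple}, $\Ext_A^i(S_j,A)\cong\Hom_A(S_j,I^i)$, which is nonzero only if $I(j)$ is a summand of $I^i$. By diagonality this happens only for $i=\pdim I(j)$. Since $\pdim S_j=\sup\{i\mid\Ext_A^i(S_j,A)\neq 0\}$ in finite global dimension, this forces $\gr S_j=\pdim S_j$.

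The comparison: your route gives an explicit formula for every column of $C$. It also shows that (2) holds for any algebra of finite global dimension whose simples are all perfect, without natural labelling and without part (1). The paper's route only buys brevity.
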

\begin{proof}
(1) follows since $A$ is Auslander regular by \cite[Theorem 4.6]{KKM} and (2) is a consequence of \cite[proposition 10.1]{Kle}.
\end{proof}

Now we made a sage program that computed all possible lower triangular 0-1 matrices $W$ with ones on the diagonal (corresponding to possible Cartan matrices of finite dimensional naturally labeled acyclic quiver algebras) satisfying the property that for $C:=-W^{T} W^{-1}$ we have $C=PU$ for a permutation matrix $P$ and an upper triangular matrix $U$ and furthermore that the non-zero entries in a column of $C$ all have the same sign. Next we used ChatGPT to produce, if possible, for each such $W$ a naturally labelled diagonal Auslander regular algebra $A$ with Cartan matrix $W$ and then we used QPA to check if this algebra is indeed diagonal Auslander regular and whether $A^{op}$ is also diagonal Auslander regular. We remark here that we used our main theorem \ref{thm:mainresult} to have a new quick test for the Auslander regular property in QPA, which we present in the appendix of this article. For $n=9$ vertices we found the first counterexample to Iyama's question that we present next.
Let $A=KQ/I$ with $K$ a field of characteristic 0 and $Q$ the quiver 
\[\begin{tikzcd}
	& 7 & 8 & 6 & \\
	1 & 3 && 5 & 9 \\
	& 2 && 4
	\arrow["j", from=1-2, to=1-3]
	\arrow["i"', from=1-4, to=1-3]
	\arrow["a", from=2-1, to=2-2]
	\arrow["e", from=2-2, to=1-2]
	\arrow["d"', from=2-2, to=2-4]
	\arrow["g"', from=2-4, to=1-4]
	\arrow["h"', from=2-4, to=2-5]
	\arrow["b", from=3-2, to=2-2]
	\arrow["c"', from=3-2, to=3-4]
	\arrow["f"', from=3-4, to=2-4]
\end{tikzcd}\]
and the relations $I=\langle adg,fh,ae,bd-cf,ej-dgi \rangle.$ $A$ is a naturally labelled quiver algebra with vector space dimension 31. 
\begin{proposition} \label{counterexampleiyamaquestion}
The algebra $A$ as above is diagonal Auslander regular, but $A^{op}$ is not diagonal Auslander regular.   
\end{proposition}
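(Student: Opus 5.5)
The proof is a finite computation, organised so that the main theorem of Section 2 (Theorem \ref{thm:mainresult}) and the perfect/coperfect reformulation cut down the work.

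\textbf{Step 1: indecomposable projectives and injectives.} Since $Q$ is acyclic, $A$ has finite global dimension. First I compute the indecomposable projective modules $P(i)=e_iA$ and the indecomposable injective modules $I(i)=D(Ae_i)$ for the $9$ vertices. This means listing the paths modulo the relations $adg,\ fh,\ ae,\ bd-cf,\ ej-dgi$. The total dimension should come out as $31$, and the lower triangular Cartan matrix $W$ gives a consistency check.

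\textbf{Step 2: $A$ is Auslander regular.} For each $I(i)$ I compute a minimal projective resolution. I then check that the last term is indecomposable, say $\psi(I(i))$, and that $i\mapsto\psi(I(i))$ is a bijection. By Theorem \ref{thm:mainresult}, $A$ is Auslander regular; because $\gldim A<\infty$, Auslander-Gorenstein means Auslander regular. As a cross-check, I would compute the Coxeter matrix $C=-W^TW^{-1}$ and verify $C=PU$ with $P$ the Auslander-Reiten permutation, as in Theorem \ref{C=PUtheorem} and Proposition \ref{diagausregpropo}.

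\textbf{Step 3: $A$ is diagonal Auslander regular.} I use \cite[Corollary 3.4]{KMT}: it suffices that every simple $A$-module $S$ is perfect, i.e. $\gr S=\pdim S$. For each of the $9$ simples I compute $\pdim S$ from a minimal projective resolution. By Lemma \ref{lem:minimal-ext-simple}, $\Ext^i_A(S,A)$ is then the cohomology of $\Hom_A(P_\bullet(S),A)$, so I read off the first nonvanishing degree and check it equals $\pdim S$. Equivalently, I could check directly that every summand of $I^i(A_A)$ has projective dimension $i$.

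\textbf{Step 4: $A^{op}$ is not diagonal Auslander regular.} $A^{op}$ is Auslander regular by Theorem \ref{FGRandARresults}, so this reduces to exhibiting one simple $A$-module $S$ that is not coperfect, i.e. $\cogr S<\idim S$. I would locate it by computing minimal injective coresolutions of the simples: $\idim S$ from their length, and $\Ext^i_A(DA,S)$ from the dual of the computation in Step 3. A natural guess is a simple at a vertex involved in the commutativity relations $bd-cf$ or $ej-dgi$, for instance $S(3)$ or $S(4)$. For that vertex one of these relations should give an extra early non-vanishing $\Ext$, so that $\cogr S<\idim S$.

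The main obstacle is Step 4: finding the witnessing simple and verifying the $\Ext$ computation rigorously by hand. In practice I would confirm all steps with QPA, using the quick Auslander-Reiten-bijection test made possible by Theorem \ref{thm:mainresult}. In the written proof I would give the explicit resolutions only for the witnessing simple, and tabulate $\pdim$ and $\gr$ for the others.
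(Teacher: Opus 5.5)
Your outline matches the paper's proof: compute the minimal resolutions explicitly, show that all simple $A$-modules are perfect, and exhibit a simple $A$-module that is not coperfect. You deduce Auslander regularity from the Auslander--Reiten bijection (Theorem~\ref{thm:mainresult}), whereas the paper quotes the criterion $\pdim I(i)=\gr S(i)$. This is a harmless variation, because both are read off from the same nine projective resolutions. Their last terms $P(8),P(5),P(6),P(9),P(3),P(7),P(4),P(2),P(1)$ are indecomposable and pairwise distinct.

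The one concrete prediction you make is wrong, however, and it concerns the only nontrivial point of the second half: $S(3)$ and $S(4)$ are coperfect. The quotients $I(3)/S(3)\cong S(1)\oplus S(2)$ and $I(4)/S(4)\cong S(2)$ are injective, so both simples have injective dimension $1$. No indecomposable injective has $S(3)$ or $S(4)$ in its top, so both also have cograde $1$.

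The actual witness is $S(7)$. The efficient way to find it reuses your Step 2 data. By Lemma~\ref{lem:minimal-ext-simple}, $\Ext^k_A(DA,S(i))\cong\Hom_A(P_k(DA),S(i))$. Since the global dimension is finite, $\cogr S(i)$ and $\idim S(i)$ are therefore the smallest and largest degrees in which $P(i)$ occurs in the minimal projective resolution of $DA$. Here $P(7)$ occurs in degree $1$, via $0\to P(7)\to P(2)\to I(6)\to 0$. It also occurs in degree $2$, via $0\to P(8)\to P(6)\oplus P(7)\to P(3)\to P(1)\to I(1)\to 0$. Hence $\cogr S(7)=1<2=\idim S(7)$, while every other $P(i)$ occurs in a single degree.

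Dually, in Step 3 the relevant tool is not Lemma~\ref{lem:minimal-ext-simple} itself, whose second argument must be simple. It is its dual, $\Ext^k_A(S(i),A)\cong\Hom_A(S(i),I^k(A_A))$. Perfectness of all simples then says exactly that each $I(i)$ occurs in only one degree of the minimal injective coresolution of $A_A$. This is how the paper obtains its whole table from the two lists of resolutions.
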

\begin{proof}
We first give the minimal projective resolutions of the indecomposable injective $A$-modules:
\begin{align*}
0&\longrightarrow P(8)
\longrightarrow P(6)\oplus P(7)
\longrightarrow P(3)
\longrightarrow P(1)
\longrightarrow I(1)\longrightarrow0,\\[2mm]
0&\longrightarrow P(5)
\longrightarrow P(3)\oplus P(4)
\longrightarrow P(2)
\longrightarrow I(2)\longrightarrow0,\\[2mm]
0&\longrightarrow P(6)
\longrightarrow P(3)\oplus P(4)
\longrightarrow P(1)\oplus P(2)
\longrightarrow I(3)\longrightarrow0,\\[2mm]
0&\longrightarrow P(9)
\longrightarrow P(3)
\longrightarrow P(2)
\longrightarrow I(4)\longrightarrow0,\\[2mm]
0&\longrightarrow P(3)
\longrightarrow P(1)\oplus P(2)
\longrightarrow I(5)\longrightarrow0,\\[2mm]
0&\longrightarrow P(7)
\longrightarrow P(2)
\longrightarrow I(6)\longrightarrow0,\\[2mm]
0&\longrightarrow P(4)
\longrightarrow P(2)
\longrightarrow I(7)\longrightarrow0,\\[2mm]
0&\longrightarrow P(2)
\longrightarrow I(8)\longrightarrow0,\\[2mm]
0&\longrightarrow P(1)
\longrightarrow I(9)\longrightarrow0.
\end{align*}
Next the following record the minimal injective coresolutions of the indecomposable projective $A$-modules:
\begin{align*}
0&\longrightarrow P(1)
\longrightarrow I(9)\longrightarrow0,\\[2mm]
0&\longrightarrow P(2)
\longrightarrow I(8)\longrightarrow0,\\[2mm]
0&\longrightarrow P(3)
\longrightarrow I(8)\oplus I(9)
\longrightarrow I(5)
\longrightarrow0,\\[2mm]
0&\longrightarrow P(4)
\longrightarrow I(8)
\longrightarrow I(7)
\longrightarrow0,\\[2mm]
0&\longrightarrow P(5)
\longrightarrow I(8)\oplus I(9)
\longrightarrow 
I(5)\oplus I(7)
\longrightarrow I(2)
\longrightarrow0,\\[2mm]
0&\longrightarrow P(6)
\longrightarrow I(8)
\longrightarrow I(5)\oplus I(7)
\longrightarrow I(3)
\longrightarrow0,\\[2mm]
0&\longrightarrow P(7)
\longrightarrow I(8)
\longrightarrow I(6)
\longrightarrow0,\\[2mm]
0&\longrightarrow P(8)
\longrightarrow I(8)
\longrightarrow I(6)\oplus I(7)
\longrightarrow I(3)
\longrightarrow I(1)
\longrightarrow0,\\[2mm]
0&\longrightarrow P(9)
\longrightarrow I(9)
\longrightarrow I(5)
\longrightarrow I(4)
\longrightarrow0.
\end{align*}

We leave the elementary verification that those are indeed minimal projective resolutions and minimal injective coresolutions to the reader.

This gives us all information that we need: We can read off the projective dimensions of the indecomposable injective $A$-modules, the grade and cogrades of the simple modules and their projective and injective dimensions using Lemma \ref{lem:minimal-ext-simple} and its dual to compute Ext spaces involving simples.
Note here that for algebras of finite global dimension, one has for a module $M$ that $\pdim M=\sup \{ i \geq 0 \mid \Ext_A^i(M,A) \neq 0 \}$ and dually $\idim M = \sup \{ i \geq 0 \mid \Ext_A^i(D(A),M) \neq 0 \}$, see for example Lemma 5.5 in \cite[Chapter VI.]{ARS}. 
We record all those homological dimension in the following table:

\begin{center}
\[
\renewcommand{\arraystretch}{1.18}
\begin{array}{c|ccccc}
i & \pdim_A S(i) & \idim_A S(i) & \gr_A S(i)
& \cogr_A S(i) & \pdim_A I(i)\\
\hline
1 & 3 & 0 & 3 & 0 & 3\\
2 & 2 & 0 & 2 & 0 & 2\\
3 & 2 & 1 & 2 & 1 & 2\\
4 & 2 & 1 & 2 & 1 & 2\\
5 & 1 & 2 & 1 & 2 & 1\\
6 & 1 & 2 & 1 & 2 & 1\\
7 & 1 & 2 & 1 & 1 & 1\\
8 & 0 & 3 & 0 & 3 & 0\\
9 & 0 & 2 & 0 & 2 & 0
\end{array}
\]
\end{center}

Now an algebra $A$ of finite global dimension is Auslander regular if and only if $\pdim I(i)=\gr S(i)$ for all $i$, see for example \cite[theorem 1.1]{KMT}. It is furthermore diagonal Auslander regular if additionally $\pdim S(i)= \gr S(i)$ for all $i$.
The table confirms that we indeed have $\pdim I(i)=\gr S(i)=\pdim S(i)$ for all $i$ and $A$ is diagonal Auslander regular.
Now with $A$ also $A^{op}$ is Auslander regular, but $A^{op}$ being diagonal Auslander regular is equivalent to the dual condition that all simple modules satisfy $\cogr S(i)=\idim S(i)$. We see that the simple module $S(7)$ does not satisfy this and thus $A^{op}$ is not diagonal Auslander regular.

\end{proof}

At the end, we remark that the following problem is still open:
\begin{question}
Let $A=KP$ be the incidence algebra of a finite poset. If $A$ is right diagonal Auslander regular, is it also left diagonal Auslander regular?
\end{question}
This question has a positive answer for finite lattices $L$ as there being diagonal Auslander regular for the incidence algebra is equivalent to $L$ being distributive and this condition is left-right symmetric, see \cite{IM}.
This question also has a positive answer for all posets with at most 12 elements as was verified by computer computations.

\section{Appendix: QPA code}

In this appendix we give code for the GAP package QPA~\cite{QPA}
implementing the criteria used in this paper. Throughout, $A=KQ/I$ is a
finite-dimensional quiver algebra and $n$ is a prescribed positive integer
serving as a homological bound. The function
\texttt{IsAuslanderGorenstein} assumes that every indecomposable injective
$A$-module has projective dimension at most $n$. The function
\texttt{GradeOfModule} assumes that $\pdim_A M\leq n$, while
\texttt{IsDiagonalAuslanderRegular} assumes that $\gldim A\leq n$.
If one of the required dimensions exceeds $n$, the corresponding function
returns \texttt{false}. Thus, a negative result is conclusive only when $n$
is known to be a valid bound. For an acyclic quiver algebra, one may take the
number of simple modules as a global dimension bound, see \cite{MS}.

The first function uses Theorem~\ref{thm:mainresult}. For every
indecomposable injective module $I$, it computes $d=\pdim_A I$ and identifies
the final nonzero term in the minimal projective resolution of $I$ with
$\Omega^d(I)$. Since $A$ is a basic quiver algebra, this projective module is
indecomposable precisely when the sum of the entries of the dimension vector
of its top is one. The position of the nonzero entry determines the image of
$I$ under the Auslander--Reiten map. The map is bijective precisely when these
positions are pairwise distinct.

The second function computes the grade from
\[
 \Ext_A^i(M,A)\cong
 \Ext_A^1(\Omega^{i-1}(M),A)\qquad (i\geq 1).
\]
The third function first verifies the prescribed
global-dimension bound, then tests the Auslander--Gorenstein property, and
finally checks that every simple module is perfect.

\newpage

\begin{scriptsize}
\begin{verbatim}
LoadPackage("qpa");;

IsAuslanderGorenstein := function(A,n)
    local vertices, I, d, lastTerm, topVector, vertex;

    if not IsPosInt(n) then
        Error("n must be a positive integer");
    fi;

    vertices := [];

    for I in IndecInjectiveModules(A) do
        d := ProjDimensionOfModule(I,n);
        if not IsInt(d) then
            return false;
        fi;

        # The d-th syzygy is the final projective term.
        lastTerm := NthSyzygy(I,d);
        topVector := DimensionVector(TopOfModule(lastTerm));

        # Its top is simple exactly when the projective is indecomposable.
        if Sum(topVector) <> 1 then
            return false;
        fi;

        vertex := Position(topVector,1);
        if vertex in vertices then
            return false;
        fi;
        Add(vertices,vertex);
    od;

    return true;
end;

GradeOfModule := function(A,M,n)
    local d, regularModule, i, E;

    if not IsPosInt(n) then
        Error("n must be a positive integer");
    fi;

    d := ProjDimensionOfModule(M,n);
    if not IsInt(d) then
        return false;
    fi;

    regularModule := DirectSumOfQPAModules(
        IndecProjectiveModules(A));

    if Length(HomOverAlgebra(M,regularModule)) > 0 then
        return 0;
    fi;

    for i in [1..d] do
        E := ExtOverAlgebra(
            NthSyzygy(M,i-1),regularModule);

        # A zero Ext^1-space may be returned in either of two forms.
        if Length(E) >= 2 and Length(E[2]) > 0 then
            return i;
        fi;
    od;

    return false;
end;

IsDiagonalAuslanderRegular := function(A,n)
    local S, d, g;

    if not IsPosInt(n) then
        Error("n must be a positive integer");
    fi;

    if not IsInt(GlobalDimensionOfAlgebra(A,n)) then
        return false;
    fi;

    if not IsAuslanderGorenstein(A,n) then
        return false;
    fi;

    for S in SimpleModules(A) do
        d := ProjDimensionOfModule(S,n);
        g := GradeOfModule(A,S,n);

        if not IsInt(g) or g <> d then
            return false;
        fi;
    od;

    return true;
end;
\end{verbatim}
\end{scriptsize}

We apply the code to the algebra from
Proposition~\ref{counterexampleiyamaquestion}, over $K=\mathbb{Q}$. 

\begin{scriptsize}
\begin{verbatim}
Q := Quiver(9,[
    [1,3,"a"], [2,3,"b"], [2,4,"c"], [3,5,"d"], [3,7,"e"],
    [4,5,"f"], [5,6,"g"], [5,9,"h"], [6,8,"i"], [7,8,"j"]
]);;
kQ := PathAlgebra(Rationals,Q);;
AssignGeneratorVariables(kQ);;
A := kQ/[a*e, f*h, a*d*g, b*d-c*f, d*g*i-e*j];;
n := Length(SimpleModules(A));;
Aop := OppositeAlgebra(A);;

Dimension(A);
IsDiagonalAuslanderRegular(A,n);
IsDiagonalAuslanderRegular(Aop,n);
\end{verbatim}
\end{scriptsize}

The three commands return, respectively, \texttt{31}, \texttt{true}, and
\texttt{false}. Thus $A$ is diagonal Auslander regular, whereas $A^{\op}$ is
not. The test for the Auslander--Gorenstein property uses our well-defined
Auslander--Reiten bijection result directly, thereby avoiding a much harder check of the Auslander condition by the direct definition; compare the earlier routines in the
appendix of \cite{GKKM}.

\section*{Acknowledgements}
We profited from the use of \cite{QPA}, \cite{Sage} and ChatGPT-5.6 Sol.

This project has received funding from the European Union's Horizon Europe research and innovation programme under the Marie Sk{\l}odowska-Curie grant agreement No 101126554. Vikt\'oria Kl\'asz and Markus Kleinau are supported by the Deutsche Forschungsgemeinschaft
(DFG, German Research Foundation) under Germany's Excellence Strategy - GZ 2047/1, Projekt-ID 390685813.

\section*{Disclaimer}
Co-Funded by the European Union. Views and opinions expressed are however those of the authors only and do not necessarily reflect those of the European Union. Neither the European Union nor the granting authority can be held responsible for them.

\noindent \includegraphics[height = 1cm]{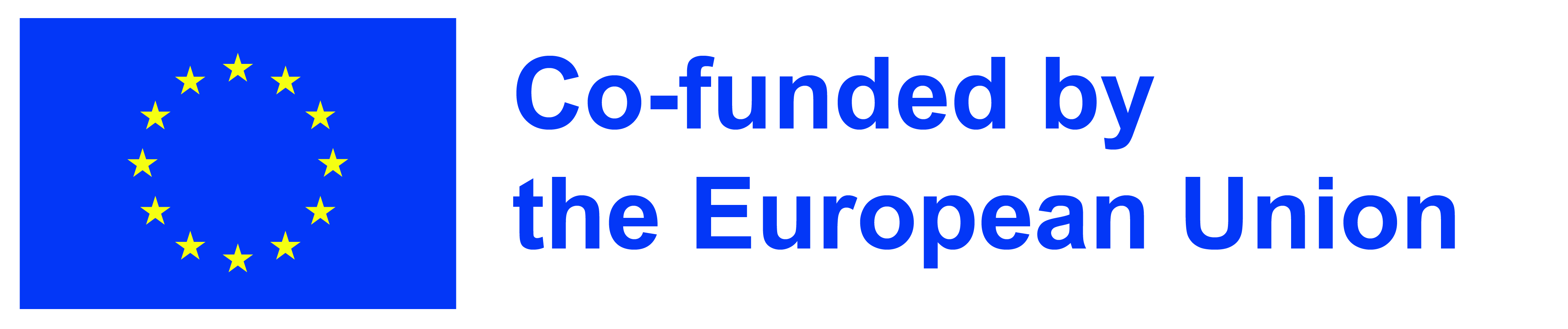}

\section*{Statement on the use of AI.}
Generative AI tools, namely ChatGPT-5.6 Sol, were used during the
exploratory and preparatory stages of this work. The mathematical strategy and proofs were conceived and directed by the authors.
ChatGPT assisted with selected aspects of the technical development, including
the formulation of auxiliary lemmas and the elaboration of technical details,
as well as with literature searches, consistency checks, and LaTex
preparation. All AI-generated suggestions were independently reviewed and
verified by the authors, who assume full responsibility for the mathematical
arguments, results, and final content of the paper.

\end{document}